\newtheorem{defn}{Definition}[section]
\newtheorem{thm}{Theorem}[section]
\newtheorem{cor}{Corollary}[section]
\newtheorem{lem}{Lemma}[section]
\newtheorem{rmk}{Remark}[section]
\newtheorem{exmp}{Example}[section]
\newcommand{\dbtilde}[1]{\accentset{\approx}{#1}}
\let \al=\alpha
\let \be=\beta
\let \var=\varphi
\let \vare=\varepsilon
\let \de=\delta
\let \la=\lambda
\let \ga=\gamma
\let \q=\quad
\let \qq=\qquad
\let \med=\medskip
\let \smal=\smallskip
\let \dps=\displaystyle
\DeclareMathOperator{\e}{e}
\DeclareMathOperator{\diag}{diag}
\newcommand{\R}{\mathbb{R}}
\newcommand{\N}{\mathbb{N}}
\def\system#1{\left\{\null\,\vcenter{
\ialign{\strut\hfil$##$&$##$\hfil&&\enspace$##$\enspace&
\hfil$##$&$##$\hfil\crcr#1\crcr}}\right.}
\begin{document}

\vspace{0.5cm}
 {\Large \centerline{\bf Stability for nonautonomous
 linear  differential systems with infinite delay}}

 \

 \centerline{\scshape Teresa Faria}
 
 \smal
 \centerline{
 Departamento de Matem\'atica and CMAF-CIO, Faculdade de Ci\^encias, Universidade de Lisboa}
  \centerline{
Campo Grande, 1749-016 Lisboa, Portugal}
 
\centerline{  Email:
teresa.faria@fc.ul.pt}

\vskip .5cm

\centerline{\it To the memory of Professor  Russell A. Johnson}

\

\begin{abstract}  We study the stability of general $n$-dimensional nonautonomous linear differential equations  with  infinite  delays.  Delay independent  criteria, as well as criteria depending on the size of some finite delays are established.  In the first situation,  the effect of the delays is dominated   by non-delayed diagonal negative feedback terms, and sufficient conditions for both
the   asymptotic   and the exponential asymptotic stability of the system are given. In the second case, the  stability depends on the size of some bounded diagonal delays and coefficients, although terms with unbounded delay may co-exist.  Our results encompass  DDEs with discrete and distributed delays, and enhance some recent achievements in the  literature.
\end{abstract}

 {\it Keywords}:  linear delay differential equations, infinite delay, exponential stability, asymptotic stability.

{\it 2010 Mathematics Subject Classification}:  34K06, 34K20, 34K25.

\section{Introduction}
\setcounter{equation}{0}

In this paper,
the  focus is to investigate the asymptotic and exponential stabilities of  a general nonautonomous linear system of delay differential equations (DDEs) with infinite delay, 
\begin{equation}\label{Lin0}
x'(t)={\cal L}(t)x_t,\q t\in I,
\end{equation} where  $I=[t_0,\infty)$ for some $t_0\in \R$, ${\cal L}(t)$ is in $L({\cal C},\R^n)$, the usual space of bounded linear operators from ${\cal C}$ to $\R^n$ equipped with  the operator norm, 
 and ${\cal C}$ is an adequate Banach space of continuous functions defined on $(-\infty,0]$ with values in $\R^n$.  
 As usual, $x_t$ denotes the entire past history of the system up to time $t$, or, in other words, $x_t(s)=x(t+s)$ for $s\le 0$.   For simplicity, here one  assumes that $ (t,\phi)\mapsto {\cal L}(t)\phi$ is continuous,  although  one could consider the more general framework of  $t \mapsto {\cal L}(t)\phi$ a Borel measurable function for each $\phi$, with $\|{\cal L}(t)\|$   bounded on $I$ by a  function $m(t)$ in $L_{\rm loc}^1(I;\R)$. 
 See \cite[Chapter 4]{HMN} for more details. 
 
The stability of  autonomous and nonautonomous linear DDEs has been the subject of intensive studies. Even for ordinary differential equations (ODEs), the nonautonomous situation is not easy to address in its generality, see e.g. important contributions by  Coppel \cite{Coppel}, 
Johnson and Sell \cite{JohnsonSell}, Sacker and Sell \cite{sase}. On the other hand, the introduction of large delays in differential equations may lead to oscillations,  loss of stability
of equilibria,  and existence of unbounded solutions.  For linear DDEs, delay independent  results for stability as well as criteria depending on the size of the delays have been established, for both scalar and multi-dimensional equations. The literature on this subject is very vast: here we only refer to some monographs \cite{Gop,HaleLunel,Kuang} and  a few selected papers \cite{AGP,Driver,FW,HK,HofbauerSo, Krisztin,SoYuChen, STZ,Yoneyama}.
%
%
%
 
 For the last few years, there has been a renewed interest in  the    analysis of stability of nonautonomous linear 
 DDEs,  and several methods and tools have been proposed, see e.g.~\cite{bb11,BDSS18,BDSS19,FeMartonPituk,GPS,GH,GH18, Hatvani16,NgocCao16,Ngoc19} and references therein.
 The main goal of this paper is to obtain new explicit sufficient conditions for the asymptotic and exponential asymptotic stability of a general linear system \eqref{Lin0}, which improve  and generalize some criteria
 in recent literature. We emphasize that here we consider very general linear DDEs with possible unbounded delays, both discrete and distributed, while typically most  authors  impose   the  delays  to be  finite or discrete, or both. Moreover,  the a priori boundedness of all the coefficients will not be required.  Two types of criteria will be obtained, depending on  whether  system  \eqref{Lin0}  possesses   diagonal  terms without delay which dominate the effect of the delayed terms, or not. The latter case is  not often treated in the literature, although there have been some recent interesting developments in this area \cite{bb11,BDSS18,BDSS19,FeMartonPituk,STZ}, following different approaches: $3/2$-stability conditions, Lyapounov  functionals, theory of monotone systems, asymptotic equivalence to  linear ODEs, etc.




The method employed here is based on an  auxiliary simple result,  which states that, 
under some algebraic conditions and without imposing the boundedness of coefficients and delays, 
the norm along  solutions is nonincreasing.  Special care is however required to deal with the infinite delay.
 Our techniques are very different from others proposed in the literature, though
 the results presented in this paper were  inspired by  some previous works, 
 which prompted  us to search for either sharper  or more embracing criteria. Some of our concrete purposes are described below.
 
  In  \cite{FO08}, Faria and Oliveira gave sharp conditions for  the exponential asymptotic stability of {\it autonomous} linear systems with finite delay and dominating instantaneous negative feedbacks.  The analysis in  \cite{FO08} was further pursued in \cite{Faria10}, for the case of infinite delay. Ngoc and Cao \cite {NgocCao16} considered a linear system, again with dominating  diagonal  terms without delay, of the form $x'(t)=-D(t)x(t)+{\cal L}(t)x_t$, where ${\cal L}(t)$ has the form ${\cal L}(t)\phi=\int_{-\infty}^0 B(t,s)\phi(s)\, ds$ and $D(t), B(t,\cdot)$ are $n\times n$ matrices of continuous functions, but assumed that  $D(t),B(t,\cdot)$ are bounded by some autonomous matrices. One of the goals of the present paper is to remove this constraint. Recently, Hatvani \cite{Hatvani16} and Gy\"ori and Horv\'ath \cite{GH}   achieved sharper results for the asymptotic stability of {\it scalar} differential equations $x'(t)= -d(t)x(t)+\be (t)x(t-\tau(t))$ and inequalities $x'(t)\le -d(t)x(t)+\be (t)x(t-\tau(t))$,  respectively, without the a priori requirement of having bounded coefficients. We shall show that some of the results in \cite{GH,GH18,Hatvani16,NgocCao16} are a simple consequence of the stability criteria established here for $n$-dimensional linear DDEs. On the other hand,  there are several recent works   where  explicit conditions for  the exponential asymptotic stability of  linear  DDEs  depending on the size of delays were found,
  see Berezansky and Braverman \cite{bb11,bb20} for the scalar case and  Berezansky et al. \cite{BDSS18,BDSS19} for $n$-dimensional systems, as well as references therein. However, in  \cite{bb11,BDSS18}  only the situation of time-varying {\it bounded} and {\it discrete} delays was considered -- two constraints removed in this work. In any case, for criteria depending on the delays, clearly  constraints on the size of  some  diagonal delays must be imposed.

We observe that not only the asymptotic stability of general  linear equations \eqref{Lin0} is important {\it per se}, but also that it has relevant consequences in the study of the global dynamics of nonautonomous  DDEs 
\begin{equation}\label{NonLin}
x'(t)={\cal L}(t)x_t+f(t,x_t),
\end{equation} 
where $f$ is smooth on some open subset of $\R\times {\cal C}$. This system can be seen as a perturbation of \eqref{Lin0},  and the stability or instability of its linearization at e.g. 0 (if $f(t,0)=0, D_2f(t,0)=0$ for all $t$) is a key ingredient to  further analyze the large-time behavior of solutions, in terms of local or  global asymptotic stability,  existence of oscillatory solutions, and many other features. In the autonomous case $x'(t)={\cal L}x_t+f(x_t)$, where   $f(0)=0,f'(0)=0$,  the well-known  {\it principle of linearized stability} is valid for equations with infinite delay \cite{DG}. The nonautonomous situation is certainly more difficult to analyze, but   a crucial idea is to use the variation of constant formula and the stability properties of the linearized system, possibly coupled with additional properties of the nonlinear perturbation $f$, such as monotonicity or boundedness, to further derive sufficient conditions for the stability, persistence and permanence of \eqref{NonLin}. This methodology was used for instance in \cite{FOS}, where  the authors studied the asymptotic behavior of solutions for a family of  nonlinear DDEs obtained as  perturbations of an ODE, given by $x'(t)=A(t)x(t)+f(t,x_t)$, with $A(t)$ an $n\times n$ matrix of continuous functions such that the ODE $x'(t)=A(t)x(t)$ is exponentially stable, and $f$   of the form $f(t,\phi)=(f_1(t,\phi_1),\dots, f_n(t,\phi_n))$
for $t\ge 0$ and  $\phi=(\phi_1,\dots,\phi_n)$ with $(t,\phi)\in {\rm dom}\, f$.
In fact, a main motivation for this work was to first  address the stability of linear DDEs \eqref{Lin0}, in order to extend the results in  \cite{FOS} to some classes of nonautonomous linear DDEs \eqref{NonLin} with  infinite delay.

 \smal

This paper is organized as follows. In Section 2, a suitable phase space ${\cal C}$ to treat DDEs with infinite delay is chosen, and some notation introduced. In Section 3, we   consider a linear DDE with  dominant nondelayed terms, start with some auxiliary results and then  establish delay-independent sufficient conditions for both its   exponential and asymptotic stabilities. In the latter case, some further restrictions on the general form of \eqref{Lin0} are imposed. 
In Section 4, we study the stability of  \eqref{Lin0} without assuming  the existence and dominance of diagonal instantaneous negative feedback terms; nevertheless  it turns out that the size of the diagonal  coefficients  and finite delays       will be decisive to derive our stability criteria, although unbounded delays may co-exist. Through Sections 3 and 4, we compare our results with some achievements in recent literature. In the last section, some illustrative examples are presented.


\section{Phase space and notation}
\setcounter{equation}{0}

In this preliminary section, we recall an abstract framework to deal with  DDEs with infinite delay.  In view of the  unbounded delays, the phase space ${\cal C}$ should satisfy some fundamental axioms which guarantee that it is `admissible', so that the classical results of existence, uniqueness, continuation for the future, and continuous dependence  of solutions on the initial data are valid -- a subject well establish in the literature. A convenient choice of ${\cal C}$ is set below, however other spaces are possible.


Consider a weight function $g$  satisfying the following properties:
  \begin{itemize}
\item[(g)] $g:(-\infty ,0]\to [1,\infty)$ is a nonincreasing continuous function such that  $g(0)=1$,  $\dps\lim_{s\to -\infty}g(s)=\infty$ and ${\lim_{u\to 0^-}{{g(s+u)}\over {g(s)}}=1}$ uniformly on $(-\infty ,0]$.
\end{itemize}

For each $n\in\N$, define the Banach space
$$C_g^0=C_g^0(\R^n):=\left\{ \phi\in C((-\infty, 0];\R^{n}) :\lim_{t\to -\infty}\frac{|\phi(s)|} {g(s)}=0\right\},$$
 with the norm
$$\|\phi\|_g=\sup_{s\le 0}{{|\phi(s)|}\over {g(s)}},$$
and $|\cdot|$  any chosen norm  in $\R^n$. 
This space is an {\it admissible} Banach phase space in the sense that it satisfies  the required axioms 
(A), (B) and (C2) of \cite{HMN}. For instance, for any $\ga >0$ the function $g(s)=\e^{-\ga s},\, s\le 0,$   satisfies the properties in (g); for such $g$, the notations $C_\ga^0:=C_{\e^{-\ga \cdot}}^0$ and $\|\phi\|_\ga:=\sup_{s\le 0} \e^{\ga s}|\phi(s)|$ are used. Alternatively, the space 
$
C_\ga=C_\ga(\R^n):=\big\{ \phi\in C((-\infty, 0];\R^{n}) : \lim_{t\to -\infty} \e^{\ga s}|\phi(s)|\ {\rm exists}\big\}
$
 with the same norm $\|\phi\|_\ga$ is   often considered in the literature.

In ${\cal C}=C_g^0$, an $n$-dimensional DDE with infinite delay is
  written in abstract form    as
\begin{equation}\label{2.1}
x'(t)=f(t,x_t),
\end{equation}
where $f:D\subset \R\times {\cal C}\to \R^n$ is continuous (or satisfies the Caratheodory conditions), and, 
as usual, the entire past of unkown solutions in the phase space ${\cal C}$ are denoted by $x_t$: $x_t(s)=x(t+s), s\le 0$. 

The space $C_g^0$ as well as $C_\ga$ are  always   {\it fading memory} spaces, which provides some further important properties for solutions of \eqref{2.1} \cite{MurakamiNaito}. In what concerns linear {\it autonomous} equations $x'(t)=Lx_t$, with $L\in L({\cal C},\R^n)$, it is well known that,  if ${\cal C}$ is a fading memory space, then
the zero solution  is asymptotically stable if and only if all the roots of the characteristic equation have negative real parts.  See \cite{HMN,MurakamiNaito} for definitions, results  and more properties. 


 Clearly, the case of systems  $x'(t)=f(t,x_t)$ with finite delay is included in
the present setting. In fact, for DDEs with finite delay $\tau\ge 0$,
take a weight function $g:(-\infty ,0]\to [1,\infty)$ such that $g(s)\equiv 1$ on $[-\tau,0]$ and
(g) holds. Thus, the space $C([-\tau, 0];\R^n)$  with the usual sup norm $\|\cdot\|_\infty$ can
be seen as a closed subspace of $C_g^0$ with the  norm $\|\cdot\|_g$.

The dual $(C_g^0(\R))'$ of $C_g^0(\R)$ is identified with the space $M_g((-\infty,0];\R)$ of Borel measures  $\mu:(-\infty,0]\to \R$, in the sense that each bounded linear functional $T:C_g^0(\R)\to\R$
 is    represented by a real Borel measure  $\mu: (-\infty,0]\to\R$,
\begin{equation*}
 T(\psi)=\int_{-\infty}^0\psi(s)\, d\mu(s),\q t\ge 0, \psi\in C_g(\R), 
   \end{equation*}
  with operator norm 
  $\|T\|=Var_{(-\infty,0]}(g\mu):=\int_{-\infty}^0g(s)\, d|\mu|(s)<\infty$
\cite{Rudin}. Thus, an operator $L\in L({\cal C},\R^n)$ is identified with an element $\eta=[\eta_{ij}]_{n\times n}$ in the space $M_g((-\infty,0];\R^{n\times n})$  of $n\times n$ matrix-valued Borel measures  on $(-\infty,0]$,    $\eta_{ij}\in M_g((-\infty,0];\R)$, in such a way that
$L\phi=\int_{-\infty}^0 [d \eta(s)]\phi(s),$ 
with norm    
$\|L\|=\|\eta\|_g:=\int_{-\infty}^0g(s)d|\eta|(s)<\infty$, where $|\eta|(s)$ is the total variation measure of $\eta(s)$.


\med
We now set some notation and terminology. As mentioned, the phase space  is a priori fixed as  ${\cal C}=C_g^0$, for some weight function $g$ satisfying (g). A vector $c$  in $\R^n$ is said to be {\it positive}   if all its components are positive, and we write $c>0$. Analogously, we define {\it nonnegative} vectors $c$, and {\it positive} and {\it nonnegative} functions $\phi\in {\cal C}$, with notation $c\ge 0, \phi>0, \phi\ge 0$, respectively.
A vector $c$ in $\R^n$  is identified in  ${\cal C}$  with the constant function $\phi(s)=c$ for $s\le 0$. For $c\in \R^n$,  it is understood that $c_i$  means the $i$th-component of $c$, for $1\le i\le n$. Analogously, $f_i$ is the $i$th-component of  a function $f$ with values in $\R^n$.

Unless otherwise stated, we suppose that $\R^n$ is equipped with the supremum norm, $|x|=|x|_\infty=\max_{1\le i\le n}|x_i|$, for $x=(x_1,\dots,x_n)\in\R^n$. If there is no possibility of misinterpretation, the norm $\|\cdot\|_g$ in ${\cal C}$ will be simply denoted by $\|\cdot\|$.
 For a positive vector $v=(v_1,\dots,v_n)$ we denote by $v^{-1}$ the vector  $v^{-1}=(v_1^{-1},\dots,v_n^{-1})$; we shall also consider  norms $|\cdot|_v$  defined by $|x|_v=\max_{1\le i\le n}(v_i|x_i|)$ for $x=(x_1,\dots,x_n)\in\R^n$ and   the corresponding norms in ${\cal C}$, given by $\|\var\|_v=\|\var\|_{g,v}=\sup_{s\le 0}g(s)^{-1}|\var(s)|_v$. Hereafter, we use  ${\bf 1}=(1,\dots, 1).$

Solutions of linear systems \eqref{Lin0} with initial conditions $x_\sigma=\phi$ $ (\sigma\in I, \phi\in {\cal C})$, i.e., $x(\sigma+s)=\phi(s),\, s\le 0,$
 are defined for all $t\ge \sigma$ \cite{HMN}; they are  denoted by  $x(t,\sigma,\phi)$ in $\R^n$, or  $x_t(\sigma,\phi)$ in ${\cal C}$. In what follows, let $I=\R^+:=[0,\infty)$, but any other choice of $I=[t_0,\infty)$  is possible.  Here, for simplicity, the concepts of asymptotic and exponential stability always refer to  stability on {\it some interval} $[\al,\infty)\subset \R^+$. Note also that, for linear systems, the asymptotic stability is equivalent to the stability and global attractivity of $x=0$. To be more precise, we will use the definitions below.

\begin{defn}\label{defn2.1}
The linear system  \eqref{Lin0}  is said to be {\bf stable} on $[\al,\infty)\subset \R^+$ if for any $\vare>0$ and $\sigma\ge \al$  there is $\de=\de(\vare,\sigma)>0$ such that  $\|x_t(\sigma,\phi)\|< \vare$ for all $t\ge \sigma$, whenever $\|\phi\|< \de$. System \eqref{Lin0} is {\bf   asymptotically stable}  if
 $x=0$ is   asymptotically stable  on some interval $[\al,\infty)\subset \R^+$; in other words, \eqref{Lin0} is  stable on  $[\al,\infty)$ and  
$\lim_{t\to\infty}x(t,\sigma,\phi)=0$ for all $\sigma\ge 0,\phi\in {\cal C}$;
 $x=0$ is  {\bf exponentially asymptotically stable}  if all solutions of \eqref{Lin0} tend to zero exponentially at infinity, uniformly on some interval $[\al,\infty)\subset \R^+$; i.e.,  there exist $\al, K,\be>0$ such that $\|x_t(\sigma,\phi)\|\le K\e^{-\be(t-\sigma)}\|\phi\|$ for all $t\ge \sigma\ge \al$ and $\phi\in {\cal C}$.   \end{defn}

%

 \section{Linear systems with  instantaneous diagonal dominance}
\setcounter{equation}{0}

In this section, we deal with linear DDEs for which  the effect of the delays is dominated  by non-delayed diagonal negative feedback terms. In order to  analyze the {\it absolute} stability, i.e., to set stability conditions which do not  dependent on the delays,
we separate nondelayed from delayed terms in ${\cal L}(t)$, so that ${\cal L}(t)$ has the form ${\cal L}(t)\phi=-D(t)\phi (0)+L(t)\phi$, and write
the  nonautonomous linear system \eqref{Lin0}  as
\begin{equation}\label{Lin3}
x'(t)=-D(t)x(t)+L(t)x_t,\q t\in I,
\end{equation} 
%
where $D(t)=[d_{ij}(t)]$ is an $n\times n$ matrix of functions on $I=\R^+$,  and $L:I\to L({\cal C},\R^n)$. 
Terms with  time-dependent discrete delays as well as distributed delays are all incorporated in $L(t)x_t$. 


For $L(t)$ as in \eqref{Lin3},  let $\eta(t)=[\eta_{ij}(t,\cdot)]_{n\times n}\in M_g((-\infty,0];\R^{n\times n})$ be such that
$$L(t)\phi=\int_{-\infty}^0 [d_s \eta(t,s)]\phi(s)\q {\rm for}\q \phi\in {\cal C},$$
with norm    given by
$\|L(t)\|=\|\eta(t)\|_g:=\int_{-\infty}^0g(s)d|\eta|(t,\cdot)(s)<\infty$.
Consider the components 
 $L(t)=(L_1(t),\dots,L_n(t))$, and write $L_i(t)\phi=\sum_{j=1}^nL_{ij}(t)\phi_j$ for $  t\ge 0, \phi=(\phi_1,\dots,\phi_n)\in {\cal C}$,
where each linear functional $L_{ij}(t):C_g(\R)\to \R$ is identified with $\eta_{ij}(t,\cdot)\in M_g((-\infty,0];\R)$.


   With \eqref{Lin0} written in  this form,   one may suppose that the operators $L_{ij}(t)$  are {\it non-atomic at zero}, i.e., $\eta_{ij}(t,0^-)=\eta_{ij}(t,0)$  for $t\ge 0$  (cf. \cite[Chapter 6]{HaleLunel}), this restriction however will not have any influence in the writing of our results.

Define 
the $n\times n$ matrix-valued functions 
 \begin{equation}\label{3.7}
 \begin{split}
 D(t)&=[d_{ij}(t)],\q \widehat D(t)=[\hat d_{ij}(t)]\ {\rm where}\ \hat d_{ij}(t)=\system{d_{ii}(t)\q &{\rm if}\ i=j\cr
 -|d_{ij}(t)|\q  &{\rm if}\ i\ne j\cr},\\
 A(t)&=\Big [\|L_{ij}(t)\|\Big ]\q {\rm and}\q M(t)=\widehat D(t)-A(t), \q {\rm for} \q t\in [0,\infty).
 \end{split}
 \end{equation}

For \eqref{Lin3},  in the sequel  we assume the general hypotheses:
  \begin{itemize}
\item[(H1)] the functions $d_{ij}:[0,\infty)\to \R, L_{ij}:[0,\infty)\to L(C_g^0(\R),\R)$ are continuous,  for $i,j=1,\dots,n$;
\item[(H2)]   there exist a vector $v> 0$ and $T\ge 0$ such that $M(t)v\ge 0$ for  $t\ge T$.
\end{itemize}

\begin{rmk}\label{rmk3.0}
{\rm To simplify the exposition, here  the regularity in (H1) is imposed. In fact, as mentioned in the Introduction, 
instead of continuous functions
one could consider the more general framework of $d_{ij}(t)$  in $L_{\rm loc}^1(\R^+;\R)$ and $t \mapsto L(t)\phi$  Borel measurable for each $\phi$, with $\|L(t)\|$   bounded on $\R^+$ by a  function $m(t)$ in $L_{\rm loc}^1(\R^+;\R)$. We stress that the proofs of our results do not depend on the continuity of the coefficients, as the reader can easily verify. Thus, it is important to notice that in particular they are  generalized in a straightforward way to linear DDEs with impulses, where coefficients and delays are piecewise continuous functions.}\end{rmk}

%

A dominance of the diagonal  instantaneous terms in \eqref{Lin3} is expressed by  condition (H2). 
 Note also that (H2) implies  that $d_i(t):=d_{ii}(t)\ge 0$ for all $t\ge 0,\, i\in\{ 1,\dots,n\},$  with the equality $d_i(t)=0$ if and  only if all the coefficients of the $i$th-lines of $D(t)$ and $A(t)$ are zero.  
   
  \begin{rmk}\label{rmk3.1} {\rm Consider \eqref{Lin3} under the general assumption (H1), and
suppose that  (H2) is satis\-fied. Set $a_{ij}(t):=\|L_{ij}(t)\|$. Rescaling the variables by $\bar x_i(t)=v_i^{-1}x_i(t)\, (1\le i\le n)$, where $v=(v_1,\dots,v_n)> 0$ is a vector as in (H2), we obtain a new linear DDE $\bar x'(t)=-\bar D(t)\bar x(t)+\bar L(t)\bar x_t$, where the corresponding matrices $\bar D(t)=[\bar d_{ij}(t)]$ and  $\bar A(t)=[\bar a_{ij}(t)]$ have entries $\bar d_{ij}(t)=v_i^{-1}d_{ij}(t)v_j$ and  $\bar a_{ij}(t)=v_i^{-1}a_{ij}(t)v_j$. In this way, and after dropping the bars for simplicity, we may consider a system \eqref{Lin3} for which  (H2) is valid with $v={\bf 1}:=(1,\dots,1)$. This scaling of $\R^n$ also transforms the norm $|x|_{v^{-1}}=\max_{1\le i\le n} v_i^{-1}|x_i|$  into the norm $|x|=\max_{1\le i\le n} |x_i|$. Throughout this paper, without loss of generality and whenever it is convenient,   if condition (H2) (or a stronger version of (H2), see (H4), (H5) below)  is satisfied,  we shall assume that it holds with the vector  $v={\bf 1}$.}
\end{rmk}

%
%
%

We start with some preliminary lemmas. 
The  auxiliary result below, although elementary,  plays a crucial role in our stability criteria.

\begin{lem}\label{lem3.1} If  assumptions (H1),  (H2) are satisfied,  the solutions of \eqref{Lin0} satisfy $|x(t,t_0,\phi)|_{v^{-1}}\le \|\phi\|_{g,v^{-1}}$ for $ t\ge t_0\ge T,\phi\in {\cal C}$, where $T,v$ are as in (H2). In particular, \eqref{Lin0}  is (uniformly) stable on $[T,\infty)$. \end{lem}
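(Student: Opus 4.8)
The plan is to reduce to the normalized case $v=\mathbf 1$ and then prove the pointwise estimate $|x(t,t_0,\phi)|\le m:=\|\phi\|_g$ for $t\ge t_0$, from which the phase-space bound and uniform stability follow. By Remark \ref{rmk3.1} I may assume $v=\mathbf 1$, so that $|\cdot|_{v^{-1}}$ is the sup-norm $|\cdot|$, $\|\cdot\|_{g,v^{-1}}=\|\cdot\|_g$, and (H2) reads $(M(t)\mathbf 1)_i=d_{ii}(t)-\sum_{j\ne i}|d_{ij}(t)|-\sum_{j=1}^n\|L_{ij}(t)\|\ge 0$ for all $i$ and $t\ge T$. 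The guiding observation is that the delayed part is naturally controlled by the \emph{phase-space} norm: writing $a_{ij}(t)=\|L_{ij}(t)\|$ and using $|x_j(t+s)|\le g(s)\|x_t\|_g$ together with $\|L_{ij}(t)\|=\int_{-\infty}^0 g(s)\,d|\eta_{ij}|(t,s)$, one gets $|L_{ij}(t)(x_t)_j|\le a_{ij}(t)\,\|x_t\|_g$ for every $t$.

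First I would record the ``fading memory'' bound that lets one replace $\|x_t\|_g$ by the current sup-norm record. If $|x(\theta)|\le c$ for all $\theta\in[t_0,t]$ with $c\ge m$, then $\|x_t\|_g\le c$: for $\theta\in[t_0,t]$ one uses $g\ge 1$, while for $\theta\le t_0$ one has $|x(\theta)|=|\phi(\theta-t_0)|\le m\,g(\theta-t_0)$ and $g(\theta-t)\ge g(\theta-t_0)$ (since $g$ is nonincreasing and $\theta-t\le\theta-t_0\le0$), so $g(\theta-t)^{-1}|x(\theta)|\le m\le c$. This is exactly the step where the infinite delay requires care, and it works thanks to property (g) of the weight $g$. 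In particular $|x(t_0)|\le\|\phi\|_g=m$ because $g(0)=1$.

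Then I would establish $|x(t)|\le m$ by a strict-comparison argument, since (H2) is only a non-strict inequality and a direct ``first time $|x|=m$'' contradiction would merely return $0\le0$. Suppose instead that $|x(t_2)|>m$ for some $t_2>t_0$, and for $\epsilon>0$ set $\psi_\epsilon(t)=m\,\e^{\epsilon(t-t_0)}+\epsilon$, a strictly increasing function with $\psi_\epsilon(t_0)=m+\epsilon>|x(t_0)|$ and $\psi_\epsilon(t_2)\to m<|x(t_2)|$ as $\epsilon\to0$. Choosing $\epsilon$ small enough that $\psi_\epsilon(t_2)<|x(t_2)|$, let $t_1\in(t_0,t_2]$ be the first crossing, so $|x(t)|<\psi_\epsilon(t)$ on $[t_0,t_1)$ and $|x(t_1)|=\psi_\epsilon(t_1)=:\beta>m$. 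Pick an index $i$ and a sign with $x_i(t_1)=\beta$ (the opposite sign is symmetric). On the one hand, $x_i-\psi_\epsilon$ attains a left maximum $0$ at $t_1$, so $x_i'(t_1)\ge\psi_\epsilon'(t_1)>0$, using $x\in C^1$ under (H1). On the other hand, evaluating the equation and using $|x_j(t_1)|\le\beta$, the bound $\|x_{t_1}\|_g\le\beta$ of the previous paragraph (legitimate with $c=\beta$ since $|x(\theta)|\le\psi_\epsilon(\theta)\le\beta$ for $\theta\in[t_0,t_1]$), and $|L_{ij}(t_1)(x_{t_1})_j|\le a_{ij}(t_1)\beta$, I would obtain
\[
x_i'(t_1)\le\Big(-d_{ii}(t_1)+\sum_{j\ne i}|d_{ij}(t_1)|+\sum_{j=1}^n a_{ij}(t_1)\Big)\beta=-\beta\,(M(t_1)\mathbf 1)_i\le0,
\]
contradicting $x_i'(t_1)>0$. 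Hence $|x(t)|\le m$ for all $t\ge t_0$. Undoing the rescaling gives $|x(t,t_0,\phi)|_{v^{-1}}\le\|\phi\|_{g,v^{-1}}$, and feeding this back into the fading-memory bound (now with $c=m$) yields $\|x_t\|_{g,v^{-1}}\le\|\phi\|_{g,v^{-1}}$ for all $t\ge t_0\ge T$; taking $\de=\vare$ in Definition \ref{defn2.1} then gives uniform stability on $[T,\infty)$. The only genuine obstacle is the far-history contribution to the delayed term, which is disposed of once and for all by the monotonicity and normalization of $g$ in the second paragraph.
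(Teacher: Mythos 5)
Your proof is correct and follows essentially the same strategy as the paper's: reduce to $v=\mathbf 1$, control $\|x_t\|_g$ by the running maximum of $|x|$ using the monotonicity of $g$ (the key infinite-delay step, done identically), and derive a contradiction from (H2) at an extremal time. The only divergence is the finishing move --- the paper integrates the differential inequality $x_i'(t)+d_i(t)x_i(t)\le d_i(t)u_i$ with the factor $\e^{\int_{t_0}^t d_i}$ to force $x_i(t_0)\ge u_i$, whereas you perturb with the strict barrier $\psi_\epsilon$ and compare derivative signs at the first crossing; both devices correctly circumvent the non-strictness of (H2).
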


\begin{proof}  As described above, after  rescaling the variables by $\bar x_i(t)=v_i^{-1}x_i(t)\, (1\le i\le n)$ we may assume  (H2)  with $v={\bf 1}:=(1,\dots,1)$,  which in turn implies $d_i(t)\ge \sum_{j\ne i} |d_{ij}(t)|+\sum_j \|L_{ij}(t)\|$ for all $i$ and $t\ge T$,  and
$|x|_{v^{-1}}=\max_{1\le i\le n} |x_i|$.

Fix $\phi\in {\cal C}, t_0\ge T$, and consider the solution $x(t)=x(t,t_0,\phi)$ of \eqref{Lin0}. We claim that $|x(t)|\le \|x_{t_0}\|_g$ for $t$ on each interval $[t_0,t_0+a]\ (a>0)$.

Define $J=[t_0,t_0+a]$. For the sake of contradiction,  suppose that $\max_{t\in J}|x(t)|>\|x_{t_0}\|_g$.
Denote $u_j=\max_{t\in J} |x_j(t)|$ for $1\le j\le n$, and take $i\in \{1,\dots,n\}$ and $ t_1\in (t_0,t_0+a]$ such that $u_i=\max_j u_j=|x_i(t_1)|=|x(t_1)|$. 
For $s\le 0$ and $t\in J$, we get
$$\frac{|x(t+s)|}{g(s)}\le \frac{|x(t+s)|}{g(t-t_0+s)}\le \|x_{t_0}\|_g<|x_i(t_1)|\q {\rm if}\q s+t\le t_0$$
and
$$ \frac{|x(t+s)|}{g(s)}\le |x(t+s)|\le |x_i(t_1)|\q {\rm if}\q t_0\le s+t,$$
 hence
$ \|x_{t}\|_g\le u_i$ for $t\in J.$
 Now, we suppose that $x_i(t_1)>0$; the case $x_i(t_1)<0$ is analogous. 
 For $d_i(t)=d_{ii}(t)$, from (H2) we obtain
 $$x_i'(t)+d_i(t)x_i(t)\le \sum_{j\ne i} |d_{ij}(t)||x_j(t)|+\sum_j \|L_{ij}(t)\| \|x_{j,t}\|_g\le d_i(t)u_i,\q t\in J,$$
 thus
 $$x_i(t)\le x_i(t_0)\e^{-\int_{t_0}^t d_i(s)\, ds}+u_i(1-\e^{-\int_{t_0}^t d_i(s)\, ds}),\q t\in J.$$
 For $t=t_1$, we derive $x_i(t_0)-u_i\ge 0$, which contradicts the assumption $u_i>\|x_{t_0}\|_g$.
\end{proof}

\begin{lem}\label{lem3.2} Consider \eqref{Lin3}  and assume  (H1).
In addition, suppose that there exists a  measurable, locally integrable function $e:\R\to \R^+$,  such that  the following conditions are satisfied:

(i) the operators 
 \begin{equation}\label{tildeL} \tilde L_{ij}(t) (\psi):=L_{ij}(t) (\psi^{t,e}),
 \end{equation} where $\psi^{t,e}(s):=\e^{\int_{t+s}^t e(u)\, du}\psi (s)$, for $s\le 0, t\gg 1$ and $\psi \in C_g^0(\R)$, are well-defined, $i,j=1,\dots,n$;

(ii) for some vector $v>0$, $ \big[\widehat D(t)-\tilde A(t)-e(t)I\big ]v\ge 0$ for $t\gg 1$, where 
$\tilde A(t)=\Big [ \|\tilde L_{ij}(t)\|\Big ]$ and $I$ is the $n\times n$  identity matrix;

(iii)   $\int_0^\infty e(t)\, dt=\infty$.\\
Then
\eqref{Lin3} is  asymptotically  stable. In other words, \eqref{Lin3} is stable (on some interval $[\al,\infty)$) and all its solutions  satisfy 
$\lim_{t\to \infty}x(t)=0.$ Moreover, if  conditions (i),~(ii) are satisfied with $e(t)\equiv \de>0$, then \eqref{Lin3} is exponentially asymptotically stable.
\end{lem}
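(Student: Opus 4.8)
The plan is to reduce the statement to Lemma \ref{lem3.1} by an exponential change of variables. Fix $\sigma$ beyond the threshold where (i)--(iii) hold, set $E(t)=\int_\sigma^t e(u)\,du$, and put $y(t)=\e^{E(t)}x(t)$. Differentiating and using $x'(t)=-D(t)x(t)+L(t)x_t$ gives
\[
y'(t)=\e^{E(t)}\big(e(t)x(t)-D(t)x(t)+L(t)x_t\big)=-(D(t)-e(t)I)y(t)+\e^{E(t)}L(t)x_t .
\]
The key algebraic point is the identity $(y_t)^{t,e}=\e^{E(t)}x_t$ in $\mathcal C$: for $s\le0$,
\[
(y_t)^{t,e}(s)=\e^{\int_{t+s}^t e}\,y(t+s)=\e^{\int_{t+s}^t e}\e^{E(t+s)}x(t+s)=\e^{E(t)}x(t+s),
\]
since $\int_{t+s}^t e+E(t+s)=E(t)$. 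By linearity of $L(t)$ this yields $\e^{E(t)}L(t)x_t=L(t)\big((y_t)^{t,e}\big)=\tilde L(t)y_t$, so that $y$ solves $y'(t)=-\tilde D(t)y(t)+\tilde L(t)y_t$ with $\tilde D(t)=D(t)-e(t)I$.

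Next I would check that this $y$-system fits Lemma \ref{lem3.1}. Because subtracting $e(t)I$ only shifts the diagonal, $\widehat{\tilde D}(t)=\widehat D(t)-e(t)I$, so the associated matrix is $\tilde M(t)=\widehat D(t)-e(t)I-\tilde A(t)$; thus condition (ii) is exactly the dominance hypothesis (H2) for the $y$-system. Condition (i) together with the finiteness of the norms in (ii) guarantees that each $\tilde L_{ij}(t)$ is a bounded operator on $C_g^0(\R)$, so (H1) holds in the Carathéodory sense of Remark \ref{rmk3.0}. One must also confirm that the change of variables respects the phase space: as $s\to-\infty$ one has $t+s<\sigma$, hence $E(t+s)\le0$ and $|y(t+s)|/g(s)\le|x(t+s)|/g(s)\to0$, so $y_t\in\mathcal C$. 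Applying Lemma \ref{lem3.1} on $[\sigma,\infty)$ gives $|y(t)|_{v^{-1}}\le\|y_\sigma\|_{g,v^{-1}}$; and since $E(\sigma+s)=\int_\sigma^{\sigma+s}e\le0$ for $s\le0$, one has $\|y_\sigma\|_{g,v^{-1}}\le\|\phi\|_{g,v^{-1}}$. Undoing the substitution,
\[
|x(t)|_{v^{-1}}=\e^{-E(t)}|y(t)|_{v^{-1}}\le\e^{-E(t)}\|\phi\|_{g,v^{-1}},\qquad t\ge\sigma .
\]
As $E(t)\ge0$ this already yields (uniform) stability, and (iii) forces $E(t)\to\infty$, hence $x(t)\to0$; this proves asymptotic stability.

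For the final claim take $e\equiv\delta$, so $E(t)=\delta(t-\sigma)$ and the pointwise bound becomes $|x(t)|_{v^{-1}}\le\e^{-\delta(t-\sigma)}\|\phi\|_{g,v^{-1}}$. What remains is to pass from this pointwise decay to decay of the history norm $\|x_t\|$ required by Definition \ref{defn2.1}. I would split $\|x_t\|_{g,v^{-1}}=\sup_{s\le0}g(s)^{-1}|x(t+s)|_{v^{-1}}$ into the recent part $t+s\ge\sigma$ and the remote part $t+s<\sigma$. On the recent part the pointwise bound gives, with $\tau=t-\sigma$, a factor $\sup_{0\le\rho\le\tau}g(\rho-\tau)^{-1}\e^{-\delta\rho}\le\max\{g(-\tau/2)^{-1},\e^{-\delta\tau/2}\}$; on the remote part the value equals initial data and is controlled by $M(\tau)\|\phi\|_{g,v^{-1}}$ with $M(\tau)=\sup_{r\le0}g(r)/g(r-\tau)$. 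The exponential decay of $g(-\tau/2)^{-1}$ and of $M(\tau)$ is precisely the fading-memory property of $C_g^0$; for the standard weights $g(s)=\e^{-\gamma s}$ these equal $\e^{-\gamma\tau/2}$ and $\e^{-\gamma\tau}$, so combining the three estimates gives $\|x_t\|\le K\e^{-\beta(t-\sigma)}\|\phi\|$ with $\beta=\tfrac12\min(\delta,\gamma)$, i.e.\ exponential asymptotic stability.

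I expect the main obstacle to be exactly this last conversion: controlling the supremum defining $\|x_t\|$ over the infinite past, where the recent history must be handled through the pointwise exponential bound while the remote history must be absorbed using the decay of the weight $g$ (the fading-memory estimate). Verifying that the rescaled operators $\tilde L_{ij}(t)$ remain admissible on $C_g^0$ — already implicit in (i)--(ii) — is the other point requiring care, but it is structural rather than computational.
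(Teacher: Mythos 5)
Your proof is correct and follows essentially the same route as the paper's: the exponential change of variables $y(t)=\e^{E(t)}x(t)$ turns hypothesis (ii) into (H2) for the transformed system via the identity $(y_t)^{t,e}=\e^{E(t)}x_t$, and Lemma \ref{lem3.1} then yields $|x(t)|_{v^{-1}}\le \e^{-E(t)}\|\phi\|_{g,v^{-1}}$ (your normalization $E(t)=\int_\sigma^t e$ even makes the step $\|y_\sigma\|\le\|\phi\|$ cleaner than the paper's choice $E(t)=\int_0^t e$). The one place you go beyond the paper is the final conversion from pointwise to history-norm exponential decay, which the paper leaves implicit; your estimate is valid for $g(s)=\e^{-\gamma s}$, but note that for a general weight satisfying only (g) the quantities $g(-\tau/2)^{-1}$ and $\sup_{r\le 0}g(r)/g(r-\tau)$ need not decay exponentially --- that requires the \emph{uniform} fading memory property rather than mere fading memory.
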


\begin{proof}  With $e(t)\ge 0$, (ii) implies (H2), thus  \eqref{Lin3} is stable. Set $\tilde a_{ij}(t)= \|\tilde L_{ij}(t)\|$, for $\tilde L_{ij}(t)$ in \eqref{tildeL}.  Without loss of generality, take $v={\bf 1}$ in (ii) (see Remark \ref{rmk3.1}) and  $T\ge 0$, so that 
 \begin{equation}\label{3.9lem2} 
 d_i(t)-\sum_{j\ne i}|d_{ij}(t)|-\sum_{j}\tilde a_{ij}(t)-e(t)\ge 0,\q t\ge T,\, i=1,\dots,n.
 \end{equation}
Effect the change of variables $y(t)=\e^{E(t)}x(t)$, where $E(t)=\int_0^t e(u)\, du$. 
The linear DDE \eqref{Lin3}  is transformed into
$$y_i'(t)=-(d_i(t)-e(t))y_i(t)-\sum_{j\ne i}d_{ij}(t)y_j(t)+\sum_{j} \tilde L_{ij}(t)(y_{j,t}),\  i=1,\dots,n,\ t\ge 0.$$
In virtue of  \eqref{3.9lem2},  this transformed system satisfies (H2). Let $x(t)=x(t,t_0,\phi)$ be a solution of the original equation.
 From Lemma \ref{lem3.1}, it follows that
$|y(t)|=|y(t, t_0,\e^{E(\cdot)} \phi)|\le \|\e^{E(\cdot)} \phi\|_g\le \|\phi\|_g$ for $t\ge t_0\ge T$, thus  $|x(t,t_0,\phi)|\le \e^{-E(t)}\|\phi\|_g$ for all $t\ge t_0\ge T$ and $\phi \in {\cal C}$. As $ \e^{-E(t)}\to 0$ as $t\to\infty$, then 
$\lim_{t\to\infty} x(t)=0$ for all solutions of \eqref{Lin3}. With $E(t)=\de t$ for some $\de>0$, we obtain
$|x(t,t_0,\phi)|\le \e^{-\de t}\|\phi\|_g$ for  $t\ge t_0\ge T$ and $\phi \in {\cal C}$, and \eqref{Lin3} is exponentially asymptotically stable.
\end{proof}

For future reference, we mention that a closer look to the proof of Lemma \ref{lem3.2} shows that the requirement of having $e(t)$ a nonnegative function is only used to derive that \eqref{Lin3} is stable, since with $e(t)\ge 0$ condition (ii) in Lemma \ref{lem3.2} implies (H2), and that $\|\e^{E(\cdot)} \phi\|_g\le \|\phi\|_g$. For the case of finite delays, condition (i) above always holds and,  to derive only the global attractivity of the zero solution, $e(t)$ need not be nonnegative.

In what follows, we shall assume some additional requirements on  $D(t),L(t)$, in order to have conditions (i)-(iii) of Lemma \ref{lem3.2} satisfied.
For simplicity,   we write 
\begin{equation*}
 L_{ij}(t)(\psi)=a_{ij}(t)\int_{-\infty}^0\psi(s)\, d_s\nu_{ij}(t,s) ,\q t\ge 0, \psi\in C_g(\R),
\end{equation*}
where the functions 
$\nu_{ij}(t,s)$ are measurable in $(t,s)\in \R^+\times(-\infty,0]$, continuous in $t\ge 0$, left-continuous in $s \in (-\infty,0)$, with $g(s)\nu_{ij}(t,s)$ of bounded variation in $s\in (-\infty,0]$  and $\nu_{ij}(t,s)$ normalized (relative to the norm $\|\cdot\|_g$ in $C_g(\R)$), so that 
\begin{equation}\label{3.2'}a_{ij}(t):=\|L_{ij}(t)\|\q {\rm and}\q \int_{-\infty}^0g(s)\, d_s|\nu_{ij}|(t,s)=1.
\end{equation}
Therefore,
 nonautonomous  linear   systems \eqref{Lin3}  are written in a more descriptive  way as 
    \begin{equation}\label{Lin}
x_i'(t)=-\sum_{j=1}^n d_{ij}(t)x_j(t)+\sum_{j=1}^na_{ij}(t) \int_{-\infty}^0 x_j(t+s)\, d_s\nu_{ij}(t,s),\  i=1,\dots,n,\ t\ge 0.
\end{equation} 
Note that each component $L_i(t)x_t=\sum_{j=1}^n L_{ij}(t)x_{j,t}$ may contain terms with several  time-dependent discrete  delays, as well as distributed delays. In particular, each $L_{ij}(t)x_{j,t}$ may be of the form
 \begin{equation}\label{Li+discrete}
L_{ij}(t)x_{j,t}=-\sum_{k=1}^p d_{ij}^k(t)\ x_j(t-\tau_{ij}^k(t))+\al_{ij}(t) \int_{-\infty}^0 x_j(t+s)\, d_s\nu_{ij}(t,s),
 \end{equation} 
 or  \begin{equation}\label{Li+distrib}
L_{ij}(t)x_{j,t}=-\sum_{k=1}^p d_{ij}^k(t)\int_{-\tau_{ij}^k(t)}^0 x_j(t+s)\, d_s\xi_{ij}^k(t,s)+\al_{ij}(t) \int_{-\infty}^0 x_j(t+s)\, d_s\nu_{ij}(t,s),
 \end{equation} 
  with  $s\mapsto \xi_{ij}^k(t,s),\nu_{ij}(t,s)$ normalized  so that 
$ \int_{-\tau_{ij}^k(t)}^0g(s)\, d_s|\xi_{ij}^k|(t,s)= \int_{-\infty}^0g(s)\, d_s|\nu_{ij}|(t,s)=1$  for all $i,j,k$. In this case, $\|L_{ij}(t)\|\le  \sum_{k=1}^p |d_{ij}^k(t)|+|\al_{ij}(t)|$.
So far, our approach  does not require any special treatment of the terms with   delays $\tau_{ij}^k(t)$ in \eqref{Li+discrete} or \eqref{Li+distrib} --  although the last results of this section concern systems with $L_{ij}(t)$ as in   \eqref{Li+distrib} with $\al_{ij}\equiv 0$ and possible unbounded delays $\tau_{ij}^k(t)$.
Linear systems where the terms with  (either discrete  or distributed)  {\it  finite diagonal delays} play an important role, and thus are separated from the terms with unbounded delay, will be analyzed in Section 4.

 With this notation, in the sequel one or more of the assumptions below will be imposed:
 \begin{itemize}
\item[(H3)] there exist $\al_0>0$ and functions $\mu_{ij}:(-\infty,0]\to [0,\infty)$ 
such that $|\nu_{ij}(t,s)|\le |\mu_{ij}(s)|$ for all $t\ge 0,s\le 0$ and 
$ \int_{-\infty}^0 \e^{-\al_0 s}g(s) \, d |\mu_{ij}|(s)<\infty,\ i,j=1,\dots,n;$
\item[(H4)]  there exist  vectors $u,v> 0$  and $T\ge 0$ such that $M(t)v\ge u$ for $t\ge T$;
\item[(H5)]  there exist a vector $v> 0$, $\al>1$ and $T\ge 0$ such that $\widehat D(t)v\ge \al A(t)v$ for  $t\ge T$.
\end{itemize}


 Some comments about these hypotheses  follow.
 
\begin{rmk}\label{rmk3.2.0} {\rm   Observe that  (H3) is trivially satisfied by linear equations with finite delay $\tau$, since one may take $\nu_{ij}(t,s)\equiv \nu_{ij}(t,-\tau)$ for $s\le -\tau$. On the other hand, 
if $g(s)=\e^{-\ga s}\ (s\le 0)$ for some $\ga >0$ and ${\cal C}=C_\ga^0$ (or ${\cal C}=C_\ga$ as defined in Section 2), it is clear that $C_\ga^0\subset C_{\ga_1}^0$ for any $\ga_1>\ga$. Therefore, in this case (H3) implies that the operators $L_{ij}(t)\in (C_\ga^0(\R))'$ are uniformly bounded for $t\ge 0$ by the operator $T_{ij}$ in $ (C_{\ga_1}^0(\R))'$ (identified with the measure $\mu_{ij}$), for some $\ga_1=\ga+\al_0>\ga$.}
  \end{rmk}

 \begin{rmk}\label{rmk3.2} {\rm  Both (H4) and (H5) are stronger versions of (H2), and they are equivalent under some boundedness conditions for the coefficients.
  In fact, if all the functions $a_{ij}(t)$ are bounded, then hypothesis (H4) implies (H5), since, with $v,u> 0$ as in (H4), then (H5) holds with the same vector $v$ and any $1< \al<1+\min_i(u_iM_i^{-1})$, where $0\le \sum_{j=1}^na_{ij}(t)v_j\le M_i$ in $[T,\infty)$. Similarly, if  all   the functions 
  $(\widehat D(t)v)_i$ are bounded from below by a positive constant for $t>0$ large (which is clearly satisfied if (H5) holds and the functions $\sum_ja_{ij}(t)$ are all  bounded from below by a positive constant), then hypothesis (H5) implies (H4): we have $(\widehat D(t)v)_i-\al\left(\sum_{j=1}^na_{ij}(t)v_j\right)\ge 0$ and $(\widehat D(t)v)_i\ge m_i>0$, which implies (H4) with the same vector $v$ and $u=(1-\al^{-1})(m_1,\dots,m_n)$. In particular, if there are $m,M>0$ such that $m\le \sum_{j=1}^na_{ij}(t)\le M$ for all $i$ and $t$ large, assumptions (H4) and (H5) are equivalent.}
  \end{rmk}


\begin{lem}\label{lem3.3} For  $\nu_{ij}$ continuous, with $\nu_{ij}(t,\cdot)\in M_g((-\infty,0];\R)$ and $\int_{-\infty}^0g(s)\, d_s|\nu_{ij}|(t,s)=1$, assume (H3). Then, for any $\eta>0$ there exists $\de>0$ such that, for $i,j=1,\dots,n$,
 \begin{equation}\label{3.8}
 \int_{-\infty}^0 \e^{-\de s}g(s)\, d_s |\nu_{ij}|(t,s)<1+\eta\q {\rm for\ all}\q t\ge 0.
 \end{equation}
\end{lem}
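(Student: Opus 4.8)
The plan is to fix $\eta>0$ and produce a single $\delta>0$ (small, with $\delta<\alpha_0$) that works simultaneously for all pairs $(i,j)$ and all $t\ge 0$. Since $\e^{-\de s}\ge 1$ for $s\le 0$, the normalization \eqref{3.2'} already forces the integral in \eqref{3.8} to be at least $\int_{-\infty}^0 g(s)\, d_s|\nu_{ij}|(t,s)=1$, so the whole content of the lemma is the matching \emph{upper} bound $1+\eta$, uniform in $t$. The natural device is to split the integral at a cutoff $-S$ ($S>0$ large) into a near part $\int_{-S}^0$ and a tail $\int_{-\infty}^{-S}$, and to choose the parameters in the order: first $S$ (to kill the tail), then $\delta$ (to keep the near part close to $1$). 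There are only finitely many pairs $(i,j)$, so uniformity in $i,j$ is obtained at the end by taking a common $S$ and the smallest admissible $\delta$.

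For the near part I would bound $\e^{-\de s}\le \e^{\de S}$ on $[-S,0]$ and again use \eqref{3.2'} to get $\int_{-S}^0 \e^{-\de s} g(s)\, d_s|\nu_{ij}|(t,s)\le \e^{\de S}\int_{-S}^0 g(s)\, d_s|\nu_{ij}|(t,s)\le \e^{\de S}$; once $S$ is fixed, taking $\delta$ small makes $\e^{\de S}\le 1+\eta/2$. For the tail I would factor $\e^{-\de s}=\e^{(\de-\al_0)s}\e^{-\al_0 s}$ and note that for $s\le -S$ and $0<\de\le \al_0/2$ one has $\e^{(\de-\al_0)s}\le \e^{-(\al_0-\de)S}\le \e^{-\al_0 S/2}$, whence
\[
\int_{-\infty}^{-S}\e^{-\de s} g(s)\, d_s|\nu_{ij}|(t,s)\le \e^{-\al_0 S/2}\int_{-\infty}^0 \e^{-\al_0 s} g(s)\, d_s|\nu_{ij}|(t,s).
\]
If the last integral is bounded by a constant $C$ independent of $t$ and $\delta$, then choosing $S$ large makes the tail less than $\eta/2$, and adding the two estimates yields \eqref{3.8}.

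The crux is establishing this uniform constant $C$, and this is the single place where (H3) is genuinely needed. The $g$-normalization alone cannot control $\int_{-\infty}^0\e^{-\al_0 s}g\, d_s|\nu_{ij}|$, since the variation may concentrate near $-\infty$ where $g$ is large; the hypothesis (H3) is precisely the extra integrability that rules this out. I would therefore use (H3) to dominate the total variation measure $d_s|\nu_{ij}|(t,\cdot)$ by the fixed measure $d|\mu_{ij}|$, uniformly in $t$, giving $\int_{-\infty}^0\e^{-\al_0 s}g\, d_s|\nu_{ij}|(t,\cdot)\le \int_{-\infty}^0\e^{-\al_0 s}g\, d|\mu_{ij}|=:C_{ij}<\infty$, finite by (H3) and independent of $t$; then $C=\max_{i,j}C_{ij}$ works. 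The delicate point to be careful about is the reading of (H3): a merely \emph{pointwise} bound $|\nu_{ij}(t,s)|\le |\mu_{ij}(s)|$ on the kernels does not by itself control the variation measures (a rapidly oscillating $\nu_{ij}$ can stay small while having enormous variation), so (H3) must be understood — as it holds in all the applications, e.g. for finite or monotone distributed delays — as dominating the variation measure $d_s|\nu_{ij}|(t,\cdot)$ by $d|\mu_{ij}|$. Granting that domination, the tail estimate above is immediate and the cutoff argument closes the proof.
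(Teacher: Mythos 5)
Your proof is correct and rests on the same two estimates as the paper's, but it is packaged more directly. The paper introduces $F(t,\al)=\int_{-\infty}^0\e^{-\al s}g(s)\,d_s|\nu_{ij}|(t,s)$ and ${\bf F}(\al)=\sup_{t\ge0}F(t,\al)$, proves that ${\bf F}$ is nondecreasing and continuous on $[0,\al_0]$ with ${\bf F}(0)=1$, and then reads off \eqref{3.8} from continuity at $\al=0$; the continuity proof is itself exactly your cutoff argument (a tail $\int_{-\infty}^{-M}$ killed by (H3), plus uniform continuity of $\e^{-\al s}$ on the compact part). You skip the detour through ${\bf F}$ and apply the $\eta/2$-splitting directly to the target integral: the near part is bounded by $\e^{\de S}$ via the normalization \eqref{3.2'}, the tail by $\e^{-\al_0 S/2}C$ via (H3), and choosing $S$ first and then $\de$ (with $\de\le\al_0/2$, a constraint independent of $S$) is the right order, so everything closes; finitely many pairs $(i,j)$ give uniformity. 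Your explicit warning about how (H3) must be read --- as domination of the variation measures, $d_s|\nu_{ij}|(t,\cdot)\le d|\mu_{ij}|$, rather than a mere pointwise bound on the kernels --- is well taken: the paper makes the same silent interpretation in the step ${\bf F}(\al)\le\int_{-\infty}^0\e^{-\al_0 s}g(s)\,d|\mu_{ij}|(s)$, and without it neither proof works. One small slip: the factorization should read $\e^{-\de s}=\e^{(\al_0-\de)s}\,\e^{-\al_0 s}$; your exponent $(\de-\al_0)s$ has the wrong sign (it would give a factor $\ge 1$ on $s\le -S$), but with the sign corrected the bound $\e^{(\al_0-\de)s}\le\e^{-(\al_0-\de)S}\le\e^{-\al_0 S/2}$ for $\de\le\al_0/2$ is exactly what you claim, and the tail estimate stands.
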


 \begin{proof}   We prove \eqref{3.8} for each  $i,j$ fixed. 
 Define
 $F(t,\al)=\int_{-\infty}^0 \e^{-\al s} g(s)\, d_s |\nu_{ij}|(t,s)$. By (H3), $F(t,\al)$ is well-defined for $(\al,t)\in[0,\al_0]\times [0,\infty)$ and 
 $${\bf F}(\al):=\sup_{t\ge 0}F(t,\al)\le \int_{-\infty}^0 \e^{-\al_0 s} g(s) \, d |\mu_{ij}|(s)=:C<\infty,\q \al\in [0,\al_0].$$
 Moreover, $\al\mapsto F(t,\al)$ ($t\ge 0$) and $ {\bf F}(\al)$ are non-decreasing in $\al\in [0,\al_0]$, with $F(t,0)={\bf F}(0)=1$.

If $C=1$, then ${\bf F}\equiv 1$ and  \eqref{3.8} holds for all $\de\in [0,\al_0]$. Otherwise, for any $\vare >0$  given,
 choose $M>0$ such that
$$\int_{-\infty}^{-M} \e^{-\al_0 s} g(s)\, d |\mu_{ij}|(s)<\vare/3.$$
Since $f(s,\al):=\e^{-\al s}$ is uniformly continuous on $[-M,0]\times [0,\al_0]$, there exists $\sigma>0$ such that
$|\e^{-\al s}-\e^{-\be s}|<\vare/3$ for any $(s,\al),(s,\be)\in [-M,0]\times [0,\al_0]$ with $|\al-\be|<\sigma$. Thus for $t\ge 0$ and $\al,\be \in  [0,\al_0]$ with $|\al-\be|<\sigma$,
$$|F(t,\al)-F(t,\be)|\le 2\int_{-\infty}^{-M} \e^{-\al_0 s} g(s) \, d |\mu_{ij}|(s)+\int_{-M}^0 |\e^{-\al s}-\e^{-\be s}|\, g(s)\, d_s |\nu_{ij}|(t,s)<\vare.$$
This estimate and the monotonicity properties of ${\bf F}$ and $F(t,\cdot)$ imply that $|{\bf F}(\al)- {\bf F}(\be)|\le \vare$ if $|\al-\be|<\sigma$.
This shows that ${\bf F}$ is continuous on $[0,\al_0]$.  Therefore for any $\eta>0$ with $1+\eta \le C$, there exists $\de \in (0,\al_0)$ such that
$$\int_{-\infty}^0 \e^{-\de s}g(s)\, d_s |\nu_{ij}|(t,s)\le {\bf F}(\de) =1+\eta,\q t\ge 0.$$
 The proof is complete. \end{proof}


We are ready to state the main results of this section.
We first address the exponential stability of \eqref{Lin}. For $D(t)=[d_{ij}(t)]$ with $d_{ij}(t)$ continuous and {\it bounded}, it follows e.g. from \cite[Proposition 6.3]{Coppel} that the ODE 
  $x'(t)=-D(t)x(t)$  is  exponentially  asymptotically stable if (H4) holds, i.e., if  for $ \widehat  D(t)$ as in \eqref{3.7}  there are positive vectors $v,u$ such that $\widehat  D(t)v\ge u$ for $t\gg 1$.
The generalization of this result to DDEs \eqref{Lin} is the subject of the next theorem, and does not require the a priori boundedness of all the coefficients.

\begin{thm}\label{thm3.1}  For  system \eqref{Lin}, assume (H1), (H3), and one of the following sets of conditions:
  \vskip 0mm
(i) (H4) is satisfied and $a_{ij}(t)$ are bounded functions on $\R^+$ for all $i,j=1,\dots,n$;
\vskip 0mm
(ii)  (H5) is satisfied and, for $v=(v_1,\dots,v_n)>0$ as in (H5),
$\liminf_{t\to\infty}(d_{ii}(t)v_i-\sum_{j\ne i}|d_{ij}(t)|v_j)>0$ for $ i=1,\dots,n$.\\
Then, \eqref{Lin3} is exponentially asymptotically stable.  \end{thm}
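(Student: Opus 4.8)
The plan is to reduce the theorem to Lemma \ref{lem3.2} by constructing, in each of the two cases, a suitable function $e(t)$ so that conditions (i)--(iii) of that lemma hold with $e(t)\equiv\de$ for some constant $\de>0$, which yields exponential stability. The payoff from Lemma \ref{lem3.2} is that I only need a \emph{constant} shift $e(t)=\de$; the cost is that I must verify the weighted-operator condition (ii) of Lemma \ref{lem3.2}, namely $[\widehat D(t)-\tilde A(t)-\de I]v\ge 0$, where $\tilde A(t)=[\|\tilde L_{ij}(t)\|]$ and $\tilde L_{ij}(t)(\psi)=L_{ij}(t)(\e^{\int_{t+\cdot}^t\de\,du}\psi)=L_{ij}(t)(\e^{-\de\,\cdot}\psi)$. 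The whole point of hypothesis (H3), via Lemma \ref{lem3.3}, is to control exactly this shift: it gives, for any $\eta>0$, a $\de>0$ with $\int_{-\infty}^0\e^{-\de s}g(s)\,d_s|\nu_{ij}|(t,s)<1+\eta$ uniformly in $t$, so that $\|\tilde L_{ij}(t)\|=a_{ij}(t)\int_{-\infty}^0\e^{-\de s}g(s)\,d_s|\nu_{ij}|(t,s)\le a_{ij}(t)(1+\eta)$. Thus the inflation of the delayed-coefficient matrix under the exponential shift is controlled multiplicatively by $(1+\eta)$, with $\eta$ as small as we please by taking $\de$ small.

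For case (i), after the rescaling of Remark \ref{rmk3.1} I may assume $v=\mathbf{1}$, so (H4) reads $(\widehat D(t)\mathbf{1})_i-\sum_j a_{ij}(t)\ge u_i>0$ for $t\ge T$, i.e.\ $d_i(t)-\sum_{j\ne i}|d_{ij}(t)|-\sum_j a_{ij}(t)\ge u_i$. I want to absorb both the $(1+\eta)$ inflation and the constant shift $\de$ into the gap $u_i$. Since the $a_{ij}(t)$ are bounded, say $\sum_j a_{ij}(t)\le M$ for all $t$, I estimate
\begin{equation*}
\big(\widehat D(t)\mathbf{1}\big)_i-\sum_j\tilde a_{ij}(t)-\de\ge u_i-\eta\sum_j a_{ij}(t)-\de\ge u_i-\eta M-\de.
\end{equation*}
Choosing first $\eta>0$ and then $\de>0$ (with $\de\le$ the value supplied by Lemma \ref{lem3.3} for that $\eta$) small enough that $\eta M+\de\le \tfrac12\min_i u_i$, the right-hand side stays positive for $t\ge T$. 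This verifies condition (ii) of Lemma \ref{lem3.2} with the constant $e(t)\equiv\de$, while condition (i) of that lemma (the operators $\tilde L_{ij}(t)$ being well-defined) follows from (H3), and condition (iii) is automatic since $\int_0^\infty\de\,dt=\infty$. Lemma \ref{lem3.2} then delivers exponential asymptotic stability.

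For case (ii), again normalize so that (H5) reads $\widehat D(t)\mathbf{1}\ge\al A(t)\mathbf{1}$ with $\al>1$, i.e.\ $d_i(t)-\sum_{j\ne i}|d_{ij}(t)|\ge\al\sum_j a_{ij}(t)$ for $t\ge T$. Writing $\rho_i(t):=d_i(t)-\sum_{j\ne i}|d_{ij}(t)|$, the extra hypothesis gives $\liminf_{t\to\infty}\rho_i(t)>0$, so there is $T'\ge T$ and $m>0$ with $\rho_i(t)\ge m$ for $t\ge T'$. Here I cannot assume $\sum_j a_{ij}(t)$ is bounded, so I must exploit the multiplicative structure: with $\eta$ chosen so that $1+\eta\le\sqrt{\al}$ (equivalently $\eta$ small, recalling $\al>1$), Lemma \ref{lem3.3} supplies $\de_0>0$ with $\tilde a_{ij}(t)\le(1+\eta)a_{ij}(t)$, whence $\sum_j\tilde a_{ij}(t)\le(1+\eta)\sum_j a_{ij}(t)\le\al^{-1}(1+\eta)\rho_i(t)$. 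Then
\begin{equation*}
\rho_i(t)-\sum_j\tilde a_{ij}(t)-\de\ge\big(1-\al^{-1}(1+\eta)\big)\rho_i(t)-\de\ge\big(1-\al^{-1}(1+\eta)\big)m-\de.
\end{equation*}
Since $1-\al^{-1}(1+\eta)>0$ by the choice of $\eta$, I shrink $\de\in(0,\de_0]$ so the last quantity is nonnegative for $t\ge T'$, again verifying condition (ii) of Lemma \ref{lem3.2} with $e(t)\equiv\de$ and concluding by that lemma.

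The main obstacle, and the reason the two cases genuinely differ, is the interplay between the \emph{additive} gap produced by (H4) and the \emph{multiplicative} gap produced by (H5) against the $(1+\eta)$-inflation coming from the exponential weight shift. In case (i) boundedness of the $a_{ij}$ converts the multiplicative inflation $\eta\sum_j a_{ij}(t)$ into a uniformly small additive perturbation $\eta M$, which is then beaten by the fixed positive gap $u_i$; in case (ii) no such bound is available, so the argument must keep everything proportional to $\rho_i(t)$ and use the slack $\al>1$ to leave room for the factor $1+\eta$, with the $\liminf$ hypothesis supplying the uniform lower bound $m$ needed to dominate the constant shift $\de$. Care is also needed to fix the quantifiers in the correct order, choosing $\eta$ first and only afterwards invoking Lemma \ref{lem3.3} to obtain the compatible $\de$, since $\de$ depends on $\eta$ and not conversely.
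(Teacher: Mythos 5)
Your proposal is correct and follows essentially the same route as the paper: reduce to Lemma \ref{lem3.2} with the constant function $e(t)\equiv\de$, use Lemma \ref{lem3.3} (via (H3)) to bound $\|\tilde L_{ij}(t)\|\le(1+\eta)a_{ij}(t)$, and then absorb the inflation additively via the bound $M$ in case (i) and multiplicatively via the slack $\al>1$ together with the $\liminf$ lower bound in case (ii). The only differences are cosmetic choices of constants (e.g.\ $\eta M+\de\le\tfrac12\min_i u_i$ versus the paper's $\eta<m(1+M)^{-1}$, $\de<\eta$).
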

    
   \begin{proof}  (i) Denote $d_i(t):=d_{ii}(t)$ and $a_{ij}(t)$ as in \eqref{3.2'}. After rescaling the variables we take $v={\bf 1}$ in (H4), and consider $T,m,M>0$ such that $d_i(t)-\sum_{j\ne i}|d_{ij}(t)|-\sum_{j}a_{ij}(t)\ge m$ and $\sum_{j}a_{ij}(t)\le M$, for all $t\ge T,\, i=1,\dots,n$. For $0<\eta <m(1+M)^{-1}$, we have 
 \begin{equation}\label{3.9} 
 d_i(t)-\eta-\sum_{j\ne i}|d_{ij}(t)|-(1+\eta)\sum_{j}a_{ij}(t)>0,\q t\ge T,\, i=1,\dots,n.
 \end{equation}
Next, by Lemma \ref{lem3.3}, choose $\de\in (0,\eta)$ such that \eqref{3.8} holds. With $e(t)=\de$,  the operators $\tilde L_{ij}(t)$ in \eqref{tildeL},  given by
   \begin{equation}\label{3.10} 
\tilde L_{ij}(t)\phi_j= L_{ij}(t)(\e^{-\de \cdot}\phi_j)=a_{ij}(t)\int_{-\infty}^0 \e^{-\de s}\phi_j(s)\, d_s\nu_{ij}(t,s)
\end{equation}
 for all $i,j$ and $t\ge 0,\phi=(\phi_1,\dots,\phi_n)\in {\cal C}$, are well defined; moreover, $\|\tilde L_{ij}(t)\|\le (1+\eta) a_{ij}(t)$. 
 From Lemma \ref{lem3.2}, it follows that \eqref{Lin3} is exponentially asymptotically stable.

(ii) As above,  take $v={\bf 1}$ in (H5). Choose $m>0$ such that $d_i(t)-\sum_{j\ne i}|d_{ij}(t)|\ge m$ for $t\ge T,1\le i\le n$,  and $\eta >0$ with $1+\eta <\al$. Next, choose $\de >0$ such that \eqref{3.8} holds  and
$\de <m[1-(1+\eta)\al^{-1}]$. We obtain \begin{equation*}
\begin{split}
 d_i(t)-\de-\sum_{j\ne i}|d_{ij}(t)|-(1+\eta)\sum_j  a_{ij}(t)&\ge [1-(1+\eta)\al^{-1}]\Big (d_i(t)-\sum_{j\ne i}|d_{ij}(t)|\Big)-\de \\
&\ge  m[1-(1+\eta)\al^{-1}]-\de>0,
\end{split}
\end{equation*}
and again the result follows from Lemma \ref{lem3.2} with $e(t)=\de$.
\end{proof}

A closer look to  the proof of (i) above shows that, in the case of finite delays, the functions  $a_{ij}(t)=\|L_{ij}(t)\|$  are not required to be bounded. 

\begin{cor}\label{cor3.1} Assume (H1), (H3), $\liminf_{t\to\infty}d_{ii}(t)>0$ and  that  there exist  $v=(v_1,\dots,v_n)> 0$, $T\ge 0$ and $\al>1$ such that 
$d_{ii}(t)v_i\ge \al \Big(\sum_j  \big [(1-\de_{ij})|d_{ij}(t)|+  a_{ij}(t)\big ]v_j\Big)$ for $ t\ge T,  i=1,\dots,n,$
  where $\de_{ij}=1$ if $i=j$, $\de_{ij}=0$ if $i\ne j$. Then \eqref{Lin} is exponentially asymptotically stable.
\end{cor}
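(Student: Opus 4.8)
The plan is to deduce this corollary directly from Theorem \ref{thm3.1}(ii), by recognizing that the hypothesis stated here is a repackaging of (H5) together with a quantitative lower bound on the diagonal entries. First I would rewrite the given inequality. Since $(1-\de_{ij})|d_{ij}(t)|$ vanishes when $i=j$ and equals $|d_{ij}(t)|$ when $i\ne j$, the assumed condition is equivalent to
\[
d_{ii}(t)v_i\ge \al\sum_{j\ne i}|d_{ij}(t)|v_j+\al\sum_j a_{ij}(t)v_j,\q t\ge T,\ i=1,\dots,n.
\]

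Next I would extract (H5). Because $\al>1$, discarding the surplus factor on the off-diagonal terms gives
\[
d_{ii}(t)v_i-\sum_{j\ne i}|d_{ij}(t)|v_j\ge d_{ii}(t)v_i-\al\sum_{j\ne i}|d_{ij}(t)|v_j\ge \al\sum_j a_{ij}(t)v_j,
\]
which is exactly $(\widehat D(t)v)_i\ge \al(A(t)v)_i$ for $t\ge T$; that is, (H5) holds with this same $v$ and $\al$.

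It then remains to verify the $\liminf$ condition in Theorem \ref{thm3.1}(ii). From the rewritten inequality, dropping the nonnegative term $\al\sum_j a_{ij}(t)v_j$ yields $\sum_{j\ne i}|d_{ij}(t)|v_j\le \al^{-1}d_{ii}(t)v_i$, whence
\[
d_{ii}(t)v_i-\sum_{j\ne i}|d_{ij}(t)|v_j\ge (1-\al^{-1})d_{ii}(t)v_i.
\]
Taking $\liminf$ as $t\to\infty$ and using $\al>1$, $v_i>0$ and the assumption $\liminf_{t\to\infty}d_{ii}(t)>0$ gives a strictly positive limit inferior for each $i$. With (H1), (H3), (H5) and this $\liminf$ bound all in force, Theorem \ref{thm3.1}(ii) applies and delivers the exponential asymptotic stability of \eqref{Lin}.

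The argument presents no genuine obstacle; the only point worth flagging is that the corollary's hypothesis is strictly stronger than (H5) alone — it imposes the same factor $\al$ simultaneously on the off-diagonal coupling terms $|d_{ij}(t)|$ and on the delayed amplitudes $a_{ij}(t)$ — and this surplus is precisely what upgrades the mild assumption $\liminf_{t\to\infty}d_{ii}(t)>0$ into the full $\liminf$ requirement of Theorem \ref{thm3.1}(ii) at no extra cost, so no separate boundedness of the $a_{ij}(t)$ (as needed in part (i)) is required.
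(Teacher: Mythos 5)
Your argument is correct: the stated inequality does imply (H5) with the same $v$ and $\al$ (since $\al>1$ lets you drop the surplus factor on the off-diagonal terms), and it also yields $d_{ii}(t)v_i-\sum_{j\ne i}|d_{ij}(t)|v_j\ge (1-\al^{-1})d_{ii}(t)v_i$, so the $\liminf$ hypothesis of Theorem \ref{thm3.1}(ii) follows from $\liminf_{t\to\infty}d_{ii}(t)>0$. Both verifications are sound, and Theorem \ref{thm3.1}(ii) then gives the conclusion.

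The paper takes a slightly different route to the same theorem: instead of checking the hypotheses of Theorem \ref{thm3.1}(ii) on the original system, it rewrites \eqref{Lin} by setting $d_{ij}(t)=0$ for $j\ne i$ and absorbing the off-diagonal instantaneous terms into the delayed part, replacing $L_{ij}(t)\psi$ by $d_{ij}(t)\psi(0)+L_{ij}(t)\psi$. Since the evaluation functional $\psi\mapsto\psi(0)$ has norm $1$ in $C_g^0(\R)$, the new operator norms are bounded by $(1-\de_{ij})|d_{ij}(t)|+a_{ij}(t)$, so the corollary's inequality becomes exactly (H5) for the modified system, while the $\liminf$ condition of (ii) degenerates to $\liminf_{t\to\infty}d_{ii}(t)v_i>0$ because the new $\widehat D(t)$ is diagonal. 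The paper's reduction is a one-line reformulation but tacitly requires one to confirm that (H3) and the normalization \eqref{3.2'} survive the insertion of a point mass at $s=0$ into the kernels $\nu_{ij}$; your direct verification sidesteps that bookkeeping entirely at the cost of the two short algebraic estimates you carried out. Your closing observation — that the shared factor $\al$ on both the off-diagonal couplings and the delayed amplitudes is what converts $\liminf_{t\to\infty}d_{ii}(t)>0$ into the full hypothesis of part (ii), with no boundedness of the $a_{ij}(t)$ needed — is exactly the right way to see why the corollary is not just a restatement of (H5).
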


\begin{proof} This is a particular case of (ii), if in \eqref{Lin} we take $d_{ij}(t)=0$  and replace $L_{ij}(t)\psi $ by $d_{ij}(t)\psi(0)+L_{ij}(t)\psi$, for all $j\ne i$.
\end{proof}

Consider now the case of {\it autonomous} linear DDEs of the form
    \begin{equation}\label{LinAu}
    x_i'(t)=-Dx(t)+Lx_t,\q t\in I,
\end{equation} 
where $D=[d_{ij}]\in \R^{n\times n}$ with $d_i:=d_{ii}>0$, and $L=(L_1,\dots,L_n)\in L({\cal C},\R^n)$. Of course, for  \eqref{LinAu} the asymptotic and exponential stabilities coincide.
As before, write the components $L_i$ of $L$ as $L_i(\phi)=\sum_{j=1}^n L_{ij}(\phi_j)$, define   the $n\times n$ matrices
  \begin{equation}\label{MatrAu}
  D=[d_{ij}],\q \widehat D=[\widehat d_{ij}],\q A=[a_{ij}],\q M=\widehat D-A,
  \end{equation} 
where $\widehat d_{ii}=d_{ii}, \widehat d_{ij}=-|d_{ij}|$ if $i\ne j$ and $a_{ij}=\|L_{ij}\|$, for $ i,j=1,\dots,n$. 
 In this situation, condition (H2) simply says that $Mv\ge 0$ for some positive vector $v$.
From Lemma \ref{lem3.1},  it follows that (H2) implies that \eqref{LinAu} is stable, thus all the roots of its characteristic equation have nonpositive real parts. It is easy to see that
 (H2) is not sufficient to guarantee that \eqref{LinAu} is exponentially stable in any space ${\cal C}$ (see e.g.~Example \ref{exmp2.4} in the last section).
On the other hand,  the next theorem asserts that, if 0 is not a characteristic value, then asymptotic stability follows  with $M$ satisfying a property  weaker than (H2). To show this, some algebraic definitions and properties are recalled below.
 
 \begin{defn}\label{defn2.2}   A square matrix $N=[n_{ij}]$  with nonpositive off-diagonal entries   (i.e., $n_{ij}\le 0$ for $i\ne j$) is said to be an {\bf  M-matrix}, respectively a {\bf non-singular M-matrix}, if   all its eigenvalues have non-negative, respectively positive, real parts. 
\end{defn}

If $n_{ij}\le 0$ for $i\ne j$, it is well-known that  $N=[n_{ij}]$  is a non-singular M-matrix if and only if there exists a positive vector $v$ such that $Nv>0$; and if there is a vector $v\ge 0$ such that $Nv\ge 0$, then $N$ is an M-matrix; the converse is not always true (but it is valid in the case of irreducible matrices).
See e.g. \cite{Berman}, also for further properties of these matrices.

\begin{thm}\label{thm3.2}  Consider in ${\cal C}$ the autonomous linear system 
$x'(t)=-Dx(t)+Lx_t,$
where $D=[d_{ij}]_{n\times n}$ and $L=(L_1,\dots,L_n)\in L({\cal C};\R^n)$. For $L_i(\var)=\sum_jL_{ij}(\var_j)$ for $\var=(\var_1,\dots,\var_n)\in {\cal C}$, define   the matrices
\begin{equation}\label{MM}
M_0=-D+\Big [L_{ij}(1)\Big],\q M=\widehat D-A,
\end{equation}
where $D,A, M$ are as in \eqref{MatrAu}.
 If $\det M_0\ne 0$ and $M$ is an M-matrix, then \eqref{LinAu} is (exponentially)  asymptotically stable.  \end{thm}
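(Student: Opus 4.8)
The plan is to pass to the characteristic equation and show that all of its roots lie in the open left half-plane. Since $\mathcal C$ is a fading memory space, by the criterion recalled in Section 2 the autonomous system is asymptotically stable if and only if every root $\lambda$ of $\det\Delta(\lambda)=0$ has $\mathrm{Re}\,\lambda<0$, where $\Delta(\lambda)=\lambda I+D-\widehat L(\lambda)$ and $\widehat L(\lambda)=\big[L_{ij}(\e^{\lambda\cdot})\big]$. Note that $\e^{\lambda\cdot}\in\mathcal C$ with $\|\e^{\lambda\cdot}\|_g\le 1$ whenever $\mathrm{Re}\,\lambda\ge 0$ (as $g\ge 1$ and $g(0)=1$), so $|L_{ij}(\e^{\lambda\cdot})|\le \|L_{ij}\|=a_{ij}$ on the closed right half-plane. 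Throughout I would use the comparison matrix $\langle\Delta(\lambda)\rangle$, with diagonal entries $|\Delta(\lambda)_{ii}|$ and off-diagonal entries $-|\Delta(\lambda)_{ij}|$, together with the standard fact (generalized diagonal dominance, cf.~\cite{Berman}) that if $\langle\Delta(\lambda)\rangle$ is a non-singular M-matrix then $\Delta(\lambda)$ is invertible.

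First I would rule out roots with $\mathrm{Re}\,\lambda=\sigma>0$. For such $\lambda$ one has $\mathrm{Re}\,\Delta_{ii}=\sigma+d_{ii}-\mathrm{Re}\,L_{ii}(\e^{\lambda\cdot})\ge\sigma+d_{ii}-a_{ii}$ and $|\Delta_{ij}|\le|d_{ij}|+a_{ij}$ for $i\ne j$, whence $\langle\Delta(\lambda)\rangle\ge M+\sigma I$ entrywise. Since $M$ is an M-matrix, $M+\sigma I$ is a \emph{non-singular} M-matrix, so there is $v>0$ with $(M+\sigma I)v>0$; then $\langle\Delta(\lambda)\rangle v>0$, which makes $\langle\Delta(\lambda)\rangle$ a non-singular M-matrix and $\Delta(\lambda)$ invertible. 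Hence no root has positive real part. The value $\lambda=0$ is disposed of at once: $\Delta(0)=D-[L_{ij}(1)]=-M_0$, so $\det\Delta(0)=(-1)^n\det M_0\ne 0$.

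The core of the proof is to exclude purely imaginary roots $\lambda=i\omega$ with $\omega\ne 0$. Suppose $\Delta(i\omega)c=0$ with $c\ne 0$. The same entrywise bounds give $\langle\Delta(i\omega)\rangle\ge M$, so $\langle\Delta(i\omega)\rangle$ is an M-matrix; as $\Delta(i\omega)$ is singular it cannot be a non-singular one, hence it is a \emph{singular} M-matrix. Restricting to $S=\{i:c_i\ne 0\}$, the principal submatrix $\Delta(i\omega)[S,S]$ annihilates $c|_S$ (entries outside $S$ contribute nothing), so after this reduction I may assume $|c|>0$; correspondingly $M[S,S]$ is an M-matrix dominated by the singular M-matrix $\langle\Delta(i\omega)\rangle[S,S]$, which forces $M[S,S]$ itself to be singular. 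Writing $\langle\Delta(i\omega)\rangle[S,S]=M[S,S]+R$ with $R\ge 0$, and taking a nonnegative left null vector $w$ of $M[S,S]$ (a left Perron vector, which exists by Perron--Frobenius), the row-wise inequality $\langle\Delta(i\omega)\rangle[S,S]\,|c|\le 0$ combined with $w^{\mathsf T}M[S,S]=0$ yields $w^{\mathsf T}R\,|c|=0$; since all factors are nonnegative and $|c_i|>0$ on $S$, this forces $R_{ii}=0$ for every index $i$ with $w_i>0$, and such an $i$ exists because $w\ne 0$.

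Finally I would extract a contradiction from that exact diagonal equality. The condition $R_{ii}=0$ means $|\Delta(i\omega)_{ii}|=d_{ii}-a_{ii}$; but $|\Delta(i\omega)_{ii}|^2=(d_{ii}-\mathrm{Re}\,L_{ii}(\e^{i\omega\cdot}))^2+(\omega-\mathrm{Im}\,L_{ii}(\e^{i\omega\cdot}))^2$ with $d_{ii}-\mathrm{Re}\,L_{ii}(\e^{i\omega\cdot})\ge d_{ii}-a_{ii}\ge 0$, so equality forces $\mathrm{Re}\,L_{ii}(\e^{i\omega\cdot})=a_{ii}$ and $\omega=\mathrm{Im}\,L_{ii}(\e^{i\omega\cdot})$. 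Because $|L_{ii}(\e^{i\omega\cdot})|\le a_{ii}$, the first identity forces $L_{ii}(\e^{i\omega\cdot})=a_{ii}\in\R$, whence $\mathrm{Im}\,L_{ii}(\e^{i\omega\cdot})=0$ and $\omega=0$, contradicting $\omega\ne 0$. Thus every root satisfies $\mathrm{Re}\,\lambda<0$, and the equivalence for fading memory spaces gives the asserted (exponential) asymptotic stability. \emph{The main obstacle} is exactly this imaginary-axis analysis when $M$ is singular and possibly reducible: a positive weight $v$ with $Mv\ge 0$ need not exist, so the usual max-component diagonal-dominance estimate fails; one must restrict to the support of the eigenvector and exploit the singular M-matrix structure through a left Perron vector in order to locate an index at which the modulus bound $|L_{ii}(\e^{i\omega\cdot})|\le a_{ii}$ degenerates to an equality.
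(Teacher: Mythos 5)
The paper gives no proof of Theorem \ref{thm3.2} — it is explicitly omitted with a pointer to the characteristic-equation/M-matrix arguments of \cite{FO08} — and your proof is exactly an argument of that type, carried out completely and correctly for the infinite-delay setting. I checked the delicate points (that $\|\e^{\lambda\cdot}\|_g\le1$ on $\mathrm{Re}\,\lambda\ge0$ so $|L_{ij}(\e^{\lambda\cdot})|\le a_{ij}$; that a Z-matrix entrywise above an M-matrix is an M-matrix, so $\langle\Delta\rangle\ge M+\sigma I$ settles $\mathrm{Re}\,\lambda>0$; that $\Delta(0)=-M_0$ settles $\lambda=0$; and the restriction to the support of the eigenvector plus the left Perron vector of the singular principal submatrix, which forces the diagonal equality $|\Delta_{ii}(i\omega)|=d_{ii}-a_{ii}$ and hence $\omega=0$) and they all hold, so no gap.
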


This theorem  generalizes the criterion obtained in \cite[Theorems 2.3 and 2.6]{FO08},
where $\det M_0\ne 0$ and $M$  an M-matrix were proven to be sharp conditions for the {\it absolute}    exponential stability of  {autonomous} linear DDEs with {\it finite delays}. Although we are considering infinite delays in $x'(t)=-Dx(t)+Lx_t$, the proof of Theorem \ref{thm3.2} follows along  arguments  similar to the ones in  \cite{FO08}, and is therefore omitted.
  
\begin {rmk}\label{rmk3.3}  {\rm In \cite{NgocCao16},
  Ngoc and Cao  investigated the exponential stability  of  a nonautonomous linear system  in $C_\ga$ (for some $\ga>0$) written in the abstract form
\begin{equation}\label{LinNT}
x'(t)=-D_0(t)x(t)+\sum_{k=1}^\infty D_k(t)x(t-\tau_k(t))+\int_{-\infty}^0B(t,s)x(t+s)\, ds,
\end{equation}
where the discrete delays $\tau_k(t)\ge 0$ are all uniformly {\it bounded} by positive constants $\tau_k$,  the $n\times n$ matrices of   functions $D_k(t)=[d_{ij}^k(t)], B(t,s)$
are all bounded by {\it autonomous} matrices $D_k, B(s)$ for all $k\in\N_0,t\ge 0,s\le 0$,  with the exception of the diagonal entries $d_{ii}^0(t)$ of $D_0(t)$, which are only required to be bounded from below by a positive constant $c_i>0, i=1,\dots,n$. 
 Note that  \eqref{Lin3} encompasses systems of the form \eqref{LinNT}.
Ngoc and Cao \cite[Theorem 3.3]{NgocCao16} derived the exponential stability of \eqref{LinNT} in some space $C_{\ga_0}$ with $\ga_0\in (0,\ga)$ under additional  stronger conditions. Besides  the uniform bounds of the entries of matrices $D_k(t),B(t,s)$ as described above, it was further imposed:   a hypothesis with the role of the present assumption  (H3),
$\sum_{k=1}^\infty \e^{\ga \tau_k}\|D_k\|<\infty$, and that there exists an $n\times n$ {\it non-singular M-matrix} of constants $M$  such that,
with our notations, $M(t)\ge M$ for all $t\ge 0$. This latter requirement  is stronger than either (H4) or (H5). More recently, Ngoc et al. \cite{Ngoc19}  considered linear DDEs \eqref{Lin3} but only with {\it finite} delay, without  requiring  a priori uniforms bounds of all the coefficients;  for the situation of finite delay, Theorem 3.2.(iv) in \cite{Ngoc19} is exactly
 the criterion expressed in our Theorem 3.1.(i), nevertheless    the other criteria in \cite[Theorem 3.2.(i)-(iii)]{Ngoc19} are more restrictive than the ones in our Theorem 3.1, even for the situation of finite delay.}
\end{rmk} 
 We now establish criteria  for the asymptotic stability  (but not necessarily  exponential stability) of \eqref{Lin}
 without imposing (H3), nor that part of the coefficients are bounded.
  Therefore, we need to restrict  the class  \eqref{Lin3}, in order to guarantee the existence of some function $e(t)$ satisfying the assumptions  in Lemma \ref{lem3.2}. Henceforth, in this section we treat linear equations \eqref{Lin} of the  particular form
  \begin{equation}\label{LinTau}
x_i'(t)=-\sum_{j=1}^n d_{ij}(t)x_j(t)+\sum_{j=1}^na_{ij}(t) \int_{-\tau_{ij}(t)}^0 x_j(t+s)\, d_s\nu_{ij}(t,s),\  i=1,\dots,n,\ t\ge 0,
\end{equation}
for some nonnegative, continuous and possibly {\it unbounded} delay  functions $\tau_{ij}(t)$.  With the previous notation, we may suppose that  $\nu_{ij}(t,\cdot):[-\tau_{ij}(t),0]\to \R$ are bounded variation functions and satisfy
\begin{equation}\label{3.16}
\int_{-\tau_{ij}(t)}^0 g(s)\, d_s|\nu_{ij}|(t,s) =1,\q t\ge 0, i,j=1,\dots,n.
\end{equation}
Alternatively, we may still consider $\nu_{ij}(t,\cdot):(-\infty,0]\to\R$ as in  \eqref{3.2'} and take $\nu_{ij}(t,s)=\nu_{ij}(t,-\tau_{ij}(t))$ for $s<-\tau_{ij}(t)$ and $t\ge 0$.
For \eqref{LinTau}, Lemma \ref{lem3.2} gives the criterion below.

\begin{thm}\label{thm3.3} Consider \eqref{LinTau} with  $\tau_{ij}:[0,\infty)\to [0,\infty)$ continuous,  and set $\tau(t)=\max_{1\le i,j\le n}\tau_{ij}(t).$ Assume  (H1), where now the bounded variation functions $\nu_{ij}(t,s)$ satisfy \eqref{3.16}.
In addition, suppose that $\tau(t)\le t$ for $t\gg 1$ and that there exists a  measurable, locally integrable function $e:\R\to \R^+$, a vector $v>0$ and $T\ge 0$,  such that  the following conditions are satisfied:

(i) $ \big[\widehat D(t)-\e^{\int_{t-\tau(t)}^t e(u)\, du}A(t)-e(t)I\big ]v\ge 0$ for $t\ge T$, where $I$ is the $n\times n$  identity matrix;

(ii)   $\int_0^\infty e(t)\, dt=\infty$.\\
Then
\eqref{LinTau} is  asymptotically  stable.\end{thm}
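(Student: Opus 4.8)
The plan is to deduce Theorem \ref{thm3.3} directly from Lemma \ref{lem3.2}, applied with \emph{exactly} the function $e$, vector $v$ and threshold $T$ furnished in the hypotheses. Equation \eqref{LinTau} is the special case of \eqref{Lin3} in which $L_{ij}(t)(\psi)=a_{ij}(t)\int_{-\tau_{ij}(t)}^0\psi(s)\,d_s\nu_{ij}(t,s)$, so all that remains is to verify the three conditions (i)--(iii) of Lemma \ref{lem3.2}. Condition (iii) there is literally hypothesis (ii) of the theorem, $\int_0^\infty e(t)\,dt=\infty$, so the real work concentrates on conditions (i) and (ii).

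First I would write out the transformed operators of \eqref{tildeL}. With $\psi^{t,e}(s)=\e^{\int_{t+s}^t e(u)\,du}\psi(s)$ one obtains
\[
\tilde L_{ij}(t)(\psi)=a_{ij}(t)\int_{-\tau_{ij}(t)}^0\e^{\int_{t+s}^t e(u)\,du}\psi(s)\,d_s\nu_{ij}(t,s).
\]
For condition (i) I would note that, since $\tau_{ij}(t)$ is finite and $\tau(t)\le t$, the integration runs over the compact interval $[-\tau_{ij}(t),0]$, on which the weight $\e^{\int_{t+s}^t e(u)\,du}$ is bounded (local integrability of $e$ makes $\int_{t-\tau(t)}^t e(u)\,du<\infty$); as $\nu_{ij}(t,\cdot)$ has bounded variation, $\tilde L_{ij}(t)$ is a well-defined bounded functional on $C_g^0(\R)$.

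The crux is the norm estimate needed for condition (ii). Here I would exploit the monotonicity of the exponential weight: for $s\in[-\tau_{ij}(t),0]$ we have $-\tau(t)\le-\tau_{ij}(t)\le s$, and since $e\ge 0$ this gives $\int_{t+s}^t e(u)\,du\le\int_{t-\tau(t)}^t e(u)\,du$, hence $\e^{\int_{t+s}^t e(u)\,du}\le\e^{\int_{t-\tau(t)}^t e(u)\,du}$. Bounding $|\psi(s)|\le\|\psi\|_g\,g(s)$ and invoking the normalization \eqref{3.16}, $\int_{-\tau_{ij}(t)}^0 g(s)\,d_s|\nu_{ij}|(t,s)=1$, yields
\[
\|\tilde L_{ij}(t)\|\le\e^{\int_{t-\tau(t)}^t e(u)\,du}\,a_{ij}(t),\qquad t\ge T,
\]
so that $\tilde A(t)\le\e^{\int_{t-\tau(t)}^t e(u)\,du}A(t)$ entrywise. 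As these matrices have nonnegative entries and $v>0$, I get $\tilde A(t)v\le\e^{\int_{t-\tau(t)}^t e(u)\,du}A(t)v$ componentwise, whence, using hypothesis (i),
\[
\big[\widehat D(t)-\tilde A(t)-e(t)I\big]v\ge\big[\widehat D(t)-\e^{\int_{t-\tau(t)}^t e(u)\,du}A(t)-e(t)I\big]v\ge 0,\qquad t\ge T.
\]
This is precisely condition (ii) of Lemma \ref{lem3.2}, and the asymptotic stability of \eqref{LinTau} then follows from that lemma.

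The main obstacle, and the heart of the argument, is exactly this estimate: replacing the individual (possibly unbounded) delays $\tau_{ij}(t)$ by the common maximal delay $\tau(t)$ inside the exponential weight, which is what makes the verification compatible with hypothesis (i) of the theorem, phrased solely in terms of $\tau(t)$. It works because $e\ge 0$ forces the weight $\e^{\int_{t+s}^t e}$ to be largest at the deepest point $s=-\tau_{ij}(t)$ of the history, which is in turn dominated by its value at $s=-\tau(t)$. The standing requirement $\tau(t)\le t$ keeps the weighted history within $[0,t]$, so the scaling $y=\e^{E}x$ used inside Lemma \ref{lem3.2} remains consistent and no care beyond the finite-delay well-definedness noted above is needed.
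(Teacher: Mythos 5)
Your proposal is correct and follows essentially the same route as the paper: the paper's (very terse) proof consists precisely of observing that the transformed operators of \eqref{tildeL} satisfy $\|\tilde L_{ij}(t)\|\le a_{ij}(t)\,\e^{\int_{t-\tau_{ij}(t)}^t e(u)\,du}$ and then invoking Lemma \ref{lem3.2}, with the replacement of $\tau_{ij}(t)$ by $\tau(t)$ (valid since $e\ge 0$) left implicit. You have merely spelled out the well-definedness of $\tilde L_{ij}(t)$ and the monotonicity step in more detail.
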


\begin{proof} The operators in \eqref{tildeL} are given by $\tilde L_{ij}(t)\var=a_{ij}(t) \int_{-\tau_{ij}(t)}^0 \var(s) \e^{ \int_{t+s}^t e(u)\, du}\, d_s\nu_{ij}(t,s)$ (for $\var\in C_g^0(\R)$), thus we have  the estimates
\begin{equation}\label{2.12}\tilde a_{ij}(t):=\| \tilde L_{ij}(t)\|\le a_{ij}(t)\e^{ \int_{t-\tau_{ij}(t)}^t e(u)\, du} ,\q {\rm }\q t\gg 1,i,j=1,\dots,n.
\end{equation}
\end{proof}

\begin{rmk}\label{rmk3.4} {\rm It is clear that in (i) the matrix $\e^{\int_{t-\tau(t)}^t e(u)\, du}A(t)$
can be replaced by the matrix $\Big [a_{ij}(t)\e^{ \int_{t-\tau_{ij}(t)}^t e(u)\, du}\Big]$. Note also that condition (i) expresses a restriction on the size of the delay functions $\tau_{ij}(t)$, nevertheless  the delays need not be bounded (see e.g. Example 5.2 in Section 5).}
\end{rmk}


\begin{thm}\label{thm3.4} Under the general notations in Theorem \ref{thm3.3} with $\tau(t)\le t$ for $t$ large, assume (H1), and  (H5).  For some $v> 0$ as in  (H5) and some $T\ge 0$,
suppose also that there exists a  measurable, locally integrable function $e:\R\to \R^+$  such that:

(i) $\min_{1\le i\le n}\left (M(t)v\right)_i\ge e(t),t\ge T$;

(ii)   $\int_0^\infty e(t)\, dt=\infty$;

(iii) $\sup_{t\ge T}\int_{t-\tau(t)}^t e(u)\, du<\infty.$\\
Then
\eqref{LinTau} is asymptotically  stable.\end{thm}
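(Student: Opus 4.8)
The standing assumptions (H1) and $\tau(t)\le t$ for $t$ large are shared verbatim with Theorem \ref{thm3.3}, so my plan is to deduce Theorem \ref{thm3.4} from Theorem \ref{thm3.3} by producing a single function that meets conditions (i)--(ii) of that theorem. The obvious candidate $e$ itself need not satisfy the delay-weighted inequality (i) of Theorem \ref{thm3.3}, so I instead shrink it by a small constant factor. Set $K:=\sup_{t\ge T}\int_{t-\tau(t)}^t e(u)\,du$, which is finite by hypothesis (iii), and write $V:=\max_{1\le i\le n}v_i$ for the vector $v>0$ of (H5). Since $\al>1$, the map $c\mapsto \al(1-cV)-\e^{cK}$ is continuous and equals $\al-1>0$ at $c=0$, so I may fix $c\in(0,1]$ with $\e^{cK}\le \al(1-cV)$, i.e.\ $1-\al^{-1}\e^{cK}-cV\ge 0$. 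I then define $\tilde e(t):=c\,e(t)$, which is again nonnegative, measurable and locally integrable; condition (ii) of Theorem \ref{thm3.3} is immediate, as $\int_0^\infty\tilde e=c\int_0^\infty e=\infty$ by (ii) here.

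The substance is to verify condition (i) of Theorem \ref{thm3.3} for the pair $(\tilde e,v)$, namely $[\widehat D(t)-\e^{\int_{t-\tau(t)}^t\tilde e(u)\,du}A(t)-\tilde e(t)I]v\ge 0$ for $t\ge T$. I would estimate the $i$-th component. By the definition of $\tilde e$ and (iii), $\int_{t-\tau(t)}^t\tilde e(u)\,du\le cK$, and since $(A(t)v)_i\ge 0$ the exponential weight is bounded above by $\e^{cK}$. Next, (H5) gives $(A(t)v)_i\le \al^{-1}(\widehat D(t)v)_i$, while condition (i) here, together with $(A(t)v)_i\ge 0$, gives $e(t)\le (M(t)v)_i\le (\widehat D(t)v)_i$, whence $\tilde e(t)v_i=c\,e(t)v_i\le cV(\widehat D(t)v)_i$. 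Combining these three estimates,
\[
(\widehat D(t)v)_i-\e^{\int_{t-\tau(t)}^t\tilde e(u)\,du}(A(t)v)_i-\tilde e(t)v_i\ge (\widehat D(t)v)_i\big(1-\al^{-1}\e^{cK}-cV\big)\ge 0,
\]
where the last inequality holds because the bracket is nonnegative by the choice of $c$ and $(\widehat D(t)v)_i\ge \al(A(t)v)_i\ge 0$ by (H5). Hence all hypotheses of Theorem \ref{thm3.3} are met with $\tilde e$ and $v$, and the asymptotic stability of \eqref{LinTau} follows.

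The only delicate point, and the step I expect to be the crux, is precisely this absorption of the delay-weighted factor: hypothesis (iii) converts $\e^{\int_{t-\tau(t)}^t e}$ into the genuine constant bound $\e^{cK}$ once $e$ is rescaled, and the strict dominance margin $\al>1$ of (H5) supplies exactly the room needed to swallow this constant \emph{uniformly} over all indices $i$ and all $t\ge T$. This is why both (iii) and the strict inequality in (H5) are indispensable, in contrast to Theorem \ref{thm3.3}, where the weight enters the hypothesis directly and no such splitting is required.
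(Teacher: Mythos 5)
Your proof is correct and follows essentially the same route as the paper's: both rescale $e$ by a small positive constant and feed the rescaled function into Theorem \ref{thm3.3}, using the margin $\al>1$ from (H5) together with hypothesis (iii) to absorb the exponential weight $\e^{\int_{t-\tau(t)}^t \tilde e(u)\,du}$ uniformly in $i$ and $t$. The only differences are cosmetic — you pin down an explicit constant $c$ where the paper argues by letting $\de\to 0^+$, and you keep the vector $v$ general instead of normalizing to $v={\bf 1}$.
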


\begin{proof} If (H5) holds, take a vector $v=(v_1,\dots,v_n)> 0$, a constant $\al>1$   and $T\ge 0$ such that 
\begin{equation}\label{2.13'}
d_i(t)v_i-\sum_{j\ne i} |d_{ij}(t)|v_j\ge \al  \Big (\sum_j a_{ij}(t)v_j\Big)\q {\rm for}\q 1\le i\le n, t\ge T.
\end{equation}
Hence $e_i(t):=d_i(t)v_i-\sum_{j\ne i} |d_{ij}(t)|v_j-\sum_j a_{ij}(t)v_j\ge (1-\al^{-1}) \Big (d_i(t)v_i-\sum_{j\ne i} |d_{ij}(t)|v_j\Big )$. Set  $E(t)=\de \int_0^t e(u)\, du$, with $\de>0$ sufficiently small. After a scaling, take $v={\bf 1}$. From (i) and \eqref{2.12},
 \begin{equation*}
 \begin{split}
 d_i(t)&-\de e(t)-\sum_{j\ne i} |d_{ij}(t)|- \sum_j \tilde a_{ij}(t)\ge
 d_i(t)-\sum_{j\ne i} |d_{ij}(t)|-\de e_i(t)-\e^{\de \int_{t-\tau(t)}^t e(u)\, du} \sum_j a_{ij}(t)\\
& \ge \left (1-\de (1-\al^{-1})-\al^{-1}\e^{\de \int_{t-\tau(t)}^t e (u)\, du}\right )\left ( d_i(t)-\sum_{j\ne i} |d_{ij}(t)|\right),\q t\ge T, i=1,\dots,n.
 \end{split}
 \end{equation*}
 From (iii), $1-\de (1-\al^{-1})-\al^{-1}\e^{\de \int_{t-\tau(t)}^t e (u)\, du}\to 1-\al^{-1}>0$ as $\de\to 0^+$, and the conclusion follows by the previous theorem.  \end{proof}
 
%

 \begin{rmk}\label{rmk3.5} {\rm 
If in each equation there is only a single {\it discrete} delay for each variable,
so that  \eqref{LinTau} reads as
 \begin{equation}\label{LinTauDisc}
x_i'(t)=-\sum_{j=1}^n d_{ij}(t)x_j(t)+\sum_{j=1}^n b_{ij}(t) x_j(t-\tau_{ij}(t)),\  i=1,\dots,n,\ t\ge 0,
\end{equation}
in \eqref{2.12} we have the identity $\tilde a_{ij}(t)=a_{ij}(t)\e^{ \int_{t-\tau_{ij}(t)}^t e(u)\, du}$, where $a_{ij}(t)=|b_{ij}(t)|$. Thus,  in the above theorem if we  do not require
the function $e(t)$   to be nonnegative,  as long as it satisfies (i)--(iii), we are still able to conclude that all solutions of \eqref{LinTauDisc} tend to zero at infinity. This result also applies to equations  with multiple  terms with {\it discrete} delays, whose coefficients are of the same sign, i.e., to equations
\begin{equation}\label{LinTauDisc2}
x_i'(t)=-\sum_{j=1}^n d_{ij}(t)x_j(t)+\sum_{j=1}^n \sum_{k=1}^p d_{ij}^k(t) x_j(t-\tau_{ij}^k(t)),\  i=1,\dots,n,\ t\ge 0,
\end{equation}
with $\Big |\sum_{k=1}^p d_{ij}^k(t) \Big |=\sum_{k=1}^p| d_{ij}^k(t)|$ for all $i,j,k$ and $t\ge 0$ large.
}
\end{rmk}

 Other criteria can be derived in a similar way. For instance, it is easy to verify that  (H5) can be eliminated in Theorem \ref{thm3.4}  if (i), (iii) are replaced by slightly stronger conditions, as follows: (i') $\min_{1\le i\le n}\left (M(t)]v\right)_i=:e_i(t)>0$ and $e_i(t)\ge e(t),t\ge T$; (iii') $\limsup_{t\to\infty} \big (\e^{\int_{t-\tau(t)}^t e(u)\, du}-1\big) \frac{\left (\widehat D(t)]v\right)_i}{e_i(t)}<\infty$ for $1\le i\le n$.

 The scalar case is addressed in the following corollary:

\begin{cor}\label{cor3.3} Consider  the scalar linear DDE
\begin{equation}\label{LinSc}
x'(t)=-d(t)x(t)+L_0(t)x_t,\q t\ge 0
\end{equation}
 with $d(t)> 0$ continuous and $L_0(t)\var=\be(t) \int_{-\tau(t)}^0\var (s)\, d_s\nu(t,s)$
for $ (t,\var)\in [0,\infty)\times  C_g^0(\R)$,
where $\tau(t)\ge 0, \nu(t,s)$ are continuous in $t$, $\nu(t,\cdot)\in M_g((-\infty,0];\R)$ with $ \int_{-\tau(t)}^0g(s)d_s|\nu|(t,s)=1$ so that
$\be(t)=\|L_0(t)\|$  for $t\ge 0$, and $\tau(t)\le t$ for $t\ge 0$ large.
Assume that there exist $T>0$ and a measurable, locally integrable function  $e:\R\to\R^+$ such that one of the following conditions holds:
 \begin{itemize}
\item[(a)] $d(t)-\be(t)\sup_{t\ge T}\e^{\int_{t-\tau(t)}^t e(u)\, du}\ge e(t)$  for $ t\ge T$, with $\int^\infty e(t)\, dt=\infty$;
 \item[(b)]  (i)  $d(t)\ge \al \be(t)$  for some $\al>1$ and $ t\ge T$;
 
  (ii) $e(t)$ satisfies
  $e(t)\le d(t)-\be(t)$, $\int^\infty e(t)\, dt=\infty$ and
 $\sup_{t\ge T}\int_{t-\tau(t)}^t e(u)\, du<\infty.$
 \end{itemize}
  Then \eqref{LinSc} is globally asymptotically stable. If in addition $d(t)\ge c$ for some constant $c>0$ and $t$ sufficiently large, then  \eqref{LinSc} is globally exponentially stable. \end{cor}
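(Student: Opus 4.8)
**The plan is to derive Corollary \ref{cor3.3} as a direct specialization of Theorems \ref{thm3.3} and \ref{thm3.4} to the scalar case $n=1$.** In one dimension the matrices of \eqref{3.7} collapse to scalars: $\widehat D(t)=d(t)$, $A(t)=\be(t)=\|L_0(t)\|$, and $M(t)=d(t)-\be(t)$. Taking the positive ``vector'' to be $v=1$, the abstract hypotheses become the explicit scalar inequalities appearing in (a) and (b). The global attractivity will follow from the asymptotic stability conclusions of those theorems, and the final exponential claim will follow from the sharper conclusion of Lemma \ref{lem3.2} once $e(t)$ can be taken bounded below.

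\emph{Case (a).} Here I would invoke Theorem \ref{thm3.3} directly. With $n=1$, condition (i) of that theorem reads $d(t)-\e^{\int_{t-\tau(t)}^t e(u)\,du}\be(t)-e(t)\ge 0$ for $t\ge T$; replacing the exponential factor by its supremum $\sup_{t\ge T}\e^{\int_{t-\tau(t)}^t e(u)\,du}$ only strengthens the requirement, so the hypothesis of (a) implies condition (i). Condition (ii) of Theorem \ref{thm3.3}, namely $\int^\infty e(t)\,dt=\infty$, is exactly the remaining assumption in (a). Hence Theorem \ref{thm3.3} yields asymptotic (hence global attractivity) stability of \eqref{LinSc}.

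\emph{Case (b).} Here the natural tool is Theorem \ref{thm3.4}. Hypothesis (b)(i), $d(t)\ge\al\be(t)$ with $\al>1$, is precisely (H5) in dimension one (with $v=1$). The function $e(t)$ of (b)(ii) satisfies $e(t)\le d(t)-\be(t)=(M(t)v)_1$, which is condition (i) of Theorem \ref{thm3.4}; the divergence $\int^\infty e=\infty$ is condition (ii); and the uniform bound $\sup_{t\ge T}\int_{t-\tau(t)}^t e(u)\,du<\infty$ is condition (iii). Thus Theorem \ref{thm3.4} applies and gives the asymptotic stability.

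\emph{Exponential stability under $d(t)\ge c>0$.} For the last assertion I would return to Lemma \ref{lem3.2}, whose final sentence promises \emph{exponential} asymptotic stability when the gain function can be chosen to be a positive constant $e(t)\equiv\de$. The idea is that if $d(t)\ge c$ for large $t$, then in either case one can take $e(t)$ bounded below by a positive constant: in (b), since $d(t)-\be(t)\ge(1-\al^{-1})d(t)\ge(1-\al^{-1})c>0$, one may choose a constant $\de\in(0,(1-\al^{-1})c)$ small enough that conditions (i)--(iii) still hold with $e\equiv\de$ (condition (iii) being automatic for constant $e$ only if $\tau(t)$ is bounded, which is where care is needed); more robustly, one runs the argument of Theorem \ref{thm3.4} with $E(t)=\de\int_0^t e(u)\,du$ and observes that $E(t)\to\infty$ at a rate forcing $\e^{-E(t)}$ to decay, upgrading attractivity to the exponential bound $|x(t)|\le\e^{-E(t)}\|\phi\|$. \textbf{The main obstacle is this last step}: translating the growth of $E(t)$ into a genuine exponential rate $\e^{-\be(t-\sigma)}$ requires controlling $\int e$ from below, and the cleanest route is to verify that $d(t)\ge c$ lets one pick $e(t)\equiv\de$ outright, reducing everything to the constant-gain conclusion of Lemma \ref{lem3.2}.
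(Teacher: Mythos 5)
Your reduction of cases (a) and (b) to Theorems \ref{thm3.3} and \ref{thm3.4} with $n=1$ and $v=1$ is exactly the intended argument: the paper states Corollary \ref{cor3.3} without proof precisely because it is an immediate scalar specialization, and your verifications (the supremum in (a) only strengthens condition (i) of Theorem \ref{thm3.3}; (b)(i) is (H5) and (b)(ii) gives (i)--(iii) of Theorem \ref{thm3.4}) are complete and correct. The global asymptotic stability part of your proposal therefore needs no changes.

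The obstacle you flag in the exponential step, however, is a genuine gap, and it cannot be closed with the tools you invoke unless an extra hypothesis is added. The mechanism of Lemma \ref{lem3.2} and Theorems \ref{thm3.3}--\ref{thm3.4} only yields $|x(t)|\le \e^{-E(t)}\,\e^{E(t_0)}\|\phi\|$ with $E(t)=\int_0^t e(u)\,du$ (or $\de E(t)$), and divergence of $E$ is strictly weaker than linear growth; to get a true rate $\e^{-\be(t-\sigma)}$ one must be able to take $e\equiv\de>0$, which forces $\|\tilde L_0(t)\|\le \be(t)\e^{\de\tau(t)}$ to stay dominated by $d(t)$ --- impossible in general when $\tau(t)$ is unbounded and no hypothesis of type (H3) is assumed. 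Indeed the extra condition $d(t)\ge c$ does not suffice as stated: the equation $x'(t)=-x(t)+\tfrac{1}{2+t}\,x(t/2)$, considered in $C_g^0(\R)$ with $g(s)=1-s$, has $d(t)\equiv 1$, $\tau(t)=t/2$, $\be(t)=\|L_0(t)\|=\tfrac{1}{2+t}\,g(-t/2)=\tfrac12$, and satisfies (b) (and also (a)) with $\al=2$ and $e(t)=1/t$, yet its positive solutions obey $x(t)\ge \tfrac{1-\e^{-1}}{2+t}\inf_{[(t-1)/2,\,t/2]}x$, whence $x(t)\ge c_0\e^{-C(\ln t)^2}$, a subexponential decay. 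So the last sentence of the corollary needs an additional assumption --- boundedness of $\tau$, or (H3) --- under which your suggested route does work: with $\tau(t)\le r$ one picks $\de>0$ so small that $\al^{-1}\e^{\de r}<1$ and $\de\le c(1-\al^{-1}\e^{\de r})$ (case (b)), then $d(t)-\de-\be(t)\e^{\de\tau(t)}\ge 0$ and the constant-gain conclusion of Lemma \ref{lem3.2} gives the exponential bound. You were right to single this out as the main obstacle rather than paper over it.
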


%

%

\begin {rmk}\label{rmk2.4} {\rm   In a  recent paper \cite{GH}, Gy\H ori and Horv\'ath studied Halanay-type nonautonomous delay differential inequalities of the form
\begin{equation}\label{2.16}
x'(t)\le -d(t)x(t)+\be (t)\sup_{s\in [t-\tau(t),t]}x(s),\q t\ge t_0,
\end{equation}
and
\begin{equation}\label{2.17}
x'(t)\le -d(t)x(t)+\be (t)x(t-\tau(t)),\q t\ge t_0,
\end{equation}
where $d,\be:[t_0,\infty)\to\R^+$ are locally integrable, $\tau:[t_0,\infty)\to\R^+$ is measurable with $t_0-\tau_0\le t-\tau(t)\to \infty$ as $t\to\infty$, for some $\tau_0\ge 0$. In fact, as mentioned,  this more general framework, of locally integrable coefficients $d(t)$, $\be(t)$ and a measurable delay  $\tau(t)$, could have been considered here.
By using a different approach, in  \cite{GH} the authors presented a comprehensive, refined analysis of
the global attractivity of the zero solution of \eqref{2.16} and \eqref{2.17} (although the attractivity in \cite{GH} only concerns the nonnegative solutions of such inequalities).  The main tools employed in \cite{GH} are generalized  Halanay-type inequalities and  the so-called {\it generalized characteristic equation}, applied to the nonautonomous scalar  differential equation
\begin{equation}\label{GHeq}
x'(t)= -d(t)x(t)+\be (t)x(t-\tau(t)),\q t\ge t_0,
\end{equation}
given by
$e(t)+\be(t)\e^{\int_{t-\tau(t)}^t e(s)\, ds}=d(t),$
where $e(t)$ is a locally integrable function on $[t_0-\tau_0,\infty)$.
As in the present paper, in \cite{GH} the coefficients $d(t),\be(t)$ are not required to be bounded,  a constraint often imposed in the  literature.
Among other  results, in particular Gy\H ori and Horv\'ath  gave a sharp criterion \cite[Theorem 2.8]{GH} as follows:
every nonnegative solution of \eqref{2.17} tends to zero at infinity if and only if condition (a) in Corollary \ref{cor3.3} is satisfied by some  measurable, locally integrable function $e(t)$, which however is not required to be nonnegative (cf. Remark \ref{rmk3.5} above). Thus, the sufficient condition of Gy\H ori and Horv\'ath's result is a simple consequence of our Corollary \ref{cor3.3} with $L_0(t)x_t=\be(t) x(t-\tau(t))$.
%
 In \cite[Theorem 3.6]{GH}, it is was also established that if there exists $\al>1$ such that $d(t)\ge \al \be (t)$ (for $t\ge t_0$) and $\sup_{t\ge 0}\int_{\max(t-\tau(t),t_0)}^t (d(u)-\be (u))\, du<\infty$, then the zero solution of  \eqref{2.16} or  \eqref{2.17} is globally attractive if  and only if $\int_{t_0}^\infty  (d(t)-\be (t))\, dt=\infty$. When applied to  \eqref{GHeq},
 these conditions read as (b) of Corollary \ref{cor3.3} with the choice $e(t)=d(t)-\be(t)$, thus, again, the criterion in \cite[Theorem 3.6]{GH} is a particular case of
  Corollary \ref{cor3.3}. Note that, not only \eqref{LinSc} is more general than the scalar equation with one single discrete delay, but also its nonnegative solutions satisfy \eqref{2.16} with $\be(t)=\|L_0(t)\|$.  For the $n$-dimensional case, whether the hypotheses in Theorems \ref{thm3.3} and \ref{thm3.4}  are optimal or not is an interesting question deserving future investigations. 
}\end{rmk}

\section{Linear systems with  pure diagonal delays}
\setcounter{equation}{0}

This section is devoted to the study of linear equations \eqref{Lin0} which do not necessarily have a domi\-nant   diagonal negative feedback term without delay in each equation. In these circumstances, we shall assume  the existence of one or several terms with {\it diagonal  finite delays}, which may be either discrete or distributed, but which nevertheless dominate the effect of both the off-diagonal terms and the diagonal terms with infinite delay. 
For DDEs with only finite discrete delays, see the recent paper \cite{BDSS19} for further interesting results on exponential stability depending on all  delays.

We go back to a general  linear DDE   \eqref{Lin0} in ${\cal C}$, and suppose that the terms with diagonal {\it finite} delays are separated from the others:
\begin{equation}\label{4.1}
x_i'(t)=-\sum_{k=1}^p d_{ii}^k(t)\ell_i^k(t)x_{i,t}+L_i(t) x_t,\q i=1,\dots,n,t\ge 0,
\end{equation}
for  bounded linear  functionals $\ell_i^k(t)\in (C([-\tau_{ii}^k(t),0];\R))'$ and $\tau_{ii}^k(t)\ge 0$  bounded delays, $d_{ii}^k:\R^+\to\R$ continuous, $k=1,\dots,p$, and,  as before,
$L(t)\in L({\cal C},\R^n)$ is given in coordinates by $L_i(t)x_t=\sum_{j=1}^n L_{ij}(t) x_{j,t}$, for $i=1,\dots, n$ and $t\ge 0$. Note that nondelayed terms may be included in \eqref{4.1}. An extra condition on the operators $\ell_i^k(t)$ will be imposed, but first further comments on the phase space ${\cal C}=C_g^0(\R^n)$ are given.

Since the delays $\tau_{ii}^k(t)$ are bounded, say $\tau_{ii}^k(t)\le r$ for all $t\ge 0,i=1,\dots, n,k=1,\dots,p$, the norms $\|\cdot\|_\infty$ and $\|\cdot\|_g$ are equivalent in $C([-\tau_{ii}^k(t),0];\R^n)$. Thus the space $M_g([-\tau_{ii}^k(t),0];\R)$ coincides with the usual space $(C([-\tau_{ii}^k(t),0];\R))'=BV([-\tau_{ii}^k(t),0];\R)$.
For convenience,  we write 
  \begin{equation}\label{T_i^k}
  \begin{split}
 \ell_i^k(t)\var&=\int_{-\tau_{ii}^k(t)}^0\var(s)\, d_s\xi_{ii}^k(t,s),\q t\ge 0,\var\in C_g(\R),
 \end{split}
 \end{equation}
for $\xi_{ii}^k(t,\cdot)\in BV([-\tau_{ii}^k(t),0];\R)$.
With \eqref{T_i^k} and the previous notation for $L(t)$,  \eqref{4.1} is given by
\begin{equation}\label{4.2}
\begin{split}
x_i'(t)=&-\sum_{k=1}^p d_{ii}^k(t)\int_{-\tau_{ii}^k(t)}^0 x_i(t+s)\, d_s\xi_{ii}^k(t,s)\\
&+\sum_{j=1}^na_{ij}(t) \int_{-\infty}^0 x_j(t+s)\, d_s\nu_{ij}(t,s),\q i=1,\dots,n,\ t\ge 0,
\end{split}
\end{equation}
where $a_{ij}(t), \nu_{ij}(t,s)$ are as in \eqref{3.2'}. 
In the sequel, the following   conditions are assumed:
  \begin{itemize}
\item[(H1*)]   (i) $\ell_i^k, L_{ij}:\R^+\to L(C_g^0(\R),\R)$ and $d_{ii}^k:\R^+\to\R$ are continuous  for all $i,j,k$;

(ii) the operators $\ell_i^k(t)$ are given by \eqref{T_i^k}, where $\tau_{ii}^k:\R^+\to[0,r]$ are continuous (for some $r>0$), and $\xi_{ii}^k(t,s)$ are measurable, continuous on $t$,  {\it nondecreasing} on $s$ and normalized so that
\begin{equation}\label{xi_normal}\xi_{ii}^k(t,0)-\xi_{ii}^k(t,-\tau_{ii}^k(t))=1,\q t\ge 0, 1\le i\le n, 1\le k\le p;
\end{equation}

 (iii) $d_i(t):=\sum_{k=1}^pd_{ii}^k(t)>0$ for $t\ge 0$ and $i=1,\dots,n$.


\end{itemize}

We give a few comments about this general hypothesis. Clearly, the condition that  $\xi_{ii}^k(t,s)$ are nondecreasing in $s\in [-\tau_{ii}^k(t),0]$ expresses that  the  functionals $ \ell_i^k(t)$ are {\it nonnegative} functionals \cite{Rudin}. Observe also that from (H1*)(ii)  we obtain 
$$\ell_i^k(t)(1)= \int_{-\tau_{ii}^k(t)}^0 \, d_s\xi_{ii}^k(t,s)=1, \q t\ge 0, 1\le i\le n, 1\le k\le p;$$
nevertheless the norm $\|\ell_i^k(t)\|=\|\ell_i^k(t)\|_g$ is given by $\|\ell_i^k(t)\|=\int_{-\tau_{ii}^k(t)}^0 g(s)\, d_s\xi_{ii}^k(t,s)$. 
Here, although  $d_i(t)=\sum_k d_{ii}^k(t)$ is required to be positive, each function $d_{ii}^k(t)$ may be either posi\-tive or negative, or change sign on $\R^+$.
  The case of discrete delays is  included above, i.e., one may have
 $\ell_i^k(t)\var=\var(-\tau_{ii}^k(t))$ for several or all $i,k$; moreover, with  $\tau_{ii}^k(t)\equiv 0$, in this situation $\ell_i^k(t)\var=\var(0)$.
  Thus, the situation with a diagonal term without delay is included in the present form \eqref{4.2}.
 Note also that the operators $L_{ij}(t)$ in \eqref{4.1} may incorporate several terms with  bounded  delays, either discrete or distributed, as in \eqref{Li+discrete} or \eqref{Li+distrib}. In other words, this framework encompasses linear DDEs of the form
 \begin{equation}\label{4.2'}
\begin{split}
x_i'(t)=\sum_{j=1}^n&\bigg [-d_{ij}^0(t)x_j(t)-\sum_{k=1}^p d_{ij}^k(t)\int_{-\tau_{ij}^k(t)}^0 x_j(t+s)\, d_s\xi_{ij}^k(t,s)\\&+\al_{ij}(t) \int_{-\infty}^0 x_j(t+s)\, d_s\nu_{ij}(t,s)\bigg ],\q i=1,\dots,n,\ t\ge 0.
\end{split}
\end{equation}
 However only the {\it diagonal zero or  bounded} delays have a relevant  role in the results below.

Define  the $n\times n$ matrix-valued functions
\begin{equation}\label{4.3}
\begin{split}
D_k(t)= \diag\, (d_{11}^k(t),\dots ,d_{nn}^k(t))\ (1\le k\le p),& \q
D(t)=\diag\, (d_1(t),\dots,d_n(t)),\\
\q A(t)=\big[\|L_{ij}(t)\|\big ],&\q
  C^\tau(t)=\diag\, (c_1^\tau(t),\dots, c_n^\tau(t)),
  \end{split}
\end{equation}
and
\begin{equation}\label{M(t)}
M(t)=D(t)-C^\tau(t)-A(t),\q t\ge 0,
\end{equation}
where 
\begin{equation*}
\begin{split}
d_i(t)&=\sum_{k=1}^pd_{ii}^k(t),\\%
c_i^\tau(t)&
= \sum_{k=1}^p |d_{ii}^k(t)|\int_{t-\tau_{ii}^k(t)}^t   \left ( \sum_{l=1}^p|d_{ii}^l(u)|g(-\tau_{ii}^l(u))+\sum_{j=1}^n \|L_{ij}(u)\|\right )du,\q i=1,\dots,n.
\end{split}
\end{equation*}

%
%
%
%
%
%
%
%
%
%

  As mentioned, the situation with  $\ell_i^k(t)\var=\var(0)$  for some  or all $i,k$ is included in \eqref{4.1}. 
   Clearly, for systems of the form \eqref{Lin3} (that is, of the form \eqref{4.2'} with $ d_{ij}^k\equiv 0$ for all $i,j$ and $k=1,\dots,p$), then $C^\tau(t)\equiv 0$ and  the matrix $M(t)$ reduces to $M(t)=\widehat D(t)-A(t)$ as in \eqref{3.7}.

We now extend Lemma \ref{lem3.1} to equations of the form \eqref{4.1}, but beforehand we  remark that the weight function $g$ of the phase space $C_g^0$ satisfies a useful property: since ${\lim_{u\to 0^-}{{g(s+u)}\over {g(s)}}=1}$ uniformly on $(-\infty ,0]$, it follows that the set $\Big\{ \frac{g(s-r)}{g(s)}: s\le 0\Big\}$ is bounded, for any $r>0$.

%
%


\begin{lem}\label{lem4.1} Consider the system  \eqref{4.1} in the phase space ${\cal C}$. If  assumption (H1*) holds and the matrix $M(t)$ in \eqref{M(t)} satisfies (H2), then there exist $m\ge1, T_1\ge 0$ and a vector $v>0$ such that
the solutions $x(t)$ of \eqref{4.1} satisfy $|x(t)|_{v^{-1}}\le m  \|x_{t_0}\|_{g,v^{-1}}$ for $ t\ge t_0\ge T_1$.
 In particular, 
  \eqref{4.1}  is (uniformly) stable on $[T_1,\infty)$. \end{lem}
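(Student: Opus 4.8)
The plan is to mimic the contradiction argument in the proof of Lemma~\ref{lem3.1}, but now with two essential modifications: first, because the feedback terms $\ell_i^k(t)x_{i,t}$ involve the diagonal coefficients $d_{ii}^k(t)$ which may change sign, one can no longer expect the norm itself to be nonincreasing; instead one aims at the weaker bound $|x(t)|_{v^{-1}}\le m\|x_{t_0}\|_{g,v^{-1}}$ for a constant $m\ge1$. Second, the value $x_i(t)$ no longer appears by itself on the right-hand side: the $i$th equation contains $-\sum_k d_{ii}^k(t)\ell_i^k(t)x_{i,t}$ rather than a clean instantaneous term $-d_i(t)x_i(t)$. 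The key device to recover the structure of Lemma~\ref{lem3.1} is to write each delayed diagonal term as a nondelayed term plus a correction, using the normalization $\ell_i^k(t)(1)=1$: namely
\begin{equation*}
\ell_i^k(t)x_{i,t}=x_i(t)-\int_{-\tau_{ii}^k(t)}^0\big(x_i(t)-x_i(t+s)\big)\,d_s\xi_{ii}^k(t,s),
\end{equation*}
so that $-\sum_k d_{ii}^k(t)\ell_i^k(t)x_{i,t}=-d_i(t)x_i(t)+R_i(t)$, where $R_i(t)$ collects the correction integrals. One then estimates $x_i(t)-x_i(t+s)=\int_{t+s}^t x_i'(u)\,du$ by integrating the differential equation itself over $[t+s,t]$, which is where the quantity $c_i^\tau(t)$ in \eqref{4.3} arises: the bound on $|x_i'(u)|$ involves $\sum_l|d_{ii}^l(u)|g(-\tau_{ii}^l(u))$ (from the diagonal delayed terms, using the weight to control $|x_i(u+s)|\le g(-\tau_{ii}^l(u))\|x_{\cdot}\|$) together with $\sum_j\|L_{ij}(u)\|$ from the remaining terms, integrated against $|d_{ii}^k(t)|$ over $[t-\tau_{ii}^k(t),t]$.

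After this rewriting, I would proceed exactly as in Lemma~\ref{lem3.1}. Rescale by $v$ so that (H2) holds with $v=\mathbf{1}$ and $|\cdot|_{v^{-1}}=|\cdot|_\infty$; fix $\phi$, $t_0\ge T_1$, and on an interval $J=[t_0,t_0+a]$ set $u_j=\max_{t\in J}|x_j(t)|$ and choose an index $i$ and a time $t_1$ where the maximum $u_i=\max_j u_j=|x_i(t_1)|$ is attained. The bounded-delay structure and the uniform bound on $\{g(s-r)/g(s):s\le0\}$ (noted just before the lemma) give, as in Lemma~\ref{lem3.1}, the estimate $\|x_{t}\|_g\le C u_i$ on $J$ for a constant $C$ depending only on $g$ and $r$ (for the finite-delay terms this constant may exceed~$1$, which is precisely why one only gets the factor $m$ rather than~$1$). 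Then the differential inequality for $x_i$ becomes
\begin{equation*}
x_i'(t)+d_i(t)x_i(t)\le \Big(\sum_{j\ne i}|d_{ij}(t)|+\sum_j\|L_{ij}(t)\|+c_i^\tau(t)\Big)\,C u_i\le d_i(t)\,C u_i,
\end{equation*}
the last inequality using (H2) for $M(t)=D(t)-C^\tau(t)-A(t)$, so that $d_i(t)\ge\sum_{j\ne i}|d_{ij}(t)|+\sum_j\|L_{ij}(t)\|+c_i^\tau(t)$. Integrating the linear inequality from $t_0$ to $t_1$ yields $x_i(t_1)\le Cu_i$, and assuming for contradiction that $u_i>m\|x_{t_0}\|_g$ for a suitably chosen $m\ge C$ forces the usual contradiction $x_i(t_0)\ge u_i$, establishing the uniform bound on every compact $J$ and hence on $[t_0,\infty)$.

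The main obstacle I anticipate is the careful bookkeeping of the correction terms $R_i(t)$ and the verification that their contribution is exactly captured by $c_i^\tau(t)$ so that (H2) closes the estimate. The subtlety is that when bounding $x_i(t)-x_i(t+s)=\int_{t+s}^t x_i'(u)\,du$ one must substitute the full right-hand side of \eqref{4.2}, which again contains delayed values $x_i(u+s')$ and $x_j(u+s')$; these must be controlled by $\|x_{u}\|_g$ and hence by $Cu_i$, with the weight $g(-\tau_{ii}^l(u))$ accounting for how far back the diagonal delayed terms reach. Getting the constant $m$ to be independent of $a$ (so the bound survives the passage to all $t\ge t_0$) requires that $C$ depend only on $g$, $r$, and the structural data through $c_i^\tau(t)$, not on the length of $J$; this is guaranteed by boundedness of $\{g(s-r)/g(s)\}$ and by (H2) absorbing $c_i^\tau(t)$ into $d_i(t)$. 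Once these estimates are organized, the contradiction argument is entirely parallel to Lemma~\ref{lem3.1}, and uniform stability on $[T_1,\infty)$ follows immediately from the bound $|x(t)|_{v^{-1}}\le m\|x_{t_0}\|_{g,v^{-1}}$.
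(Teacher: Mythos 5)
Your overall strategy coincides with the paper's: the decomposition $\ell_i^k(t)x_{i,t}=x_i(t)-\int_{-\tau_{ii}^k(t)}^0\bigl(x_i(t)-x_i(t+s)\bigr)\,d_s\xi_{ii}^k(t,s)$ based on $\ell_i^k(t)(1)=1$, the estimate of $x_i(t)-x_i(t+s)=\int_{t+s}^tx_i'(u)\,du$ by substituting the equation, and the identification of $c_i^\tau(t)$ as the resulting correction are all exactly the paper's steps. However, the way you close the contradiction contains a genuine error: you place the constant $C>1$ \emph{inside} the differential inequality, obtaining $x_i'(t)+d_i(t)x_i(t)\le d_i(t)\,Cu_i$. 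Integrating from $t_0$ to $t_1$ gives
\begin{equation*}
x_i(t_1)\le x_i(t_0)\e^{-D}+Cu_i\bigl(1-\e^{-D}\bigr),\qquad D=\int_{t_0}^{t_1}d_i(s)\,ds,
\end{equation*}
and since $x_i(t_1)=u_i$ this only yields $x_i(t_0)\ge u_i\bigl(C-(C-1)\e^{D}\bigr)$. For $C>1$ the right-hand side is at most $u_i$ and tends to $-\infty$ as $D$ grows, so no contradiction with $x_i(t_0)\le\|x_{t_0}\|_g<u_i$ results. The ``usual contradiction $x_i(t_0)\ge u_i$'' requires the coefficient of $u_i$ in the Gronwall inequality to be exactly $1$; choosing $m\ge C$ afterwards does not repair this.

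The correct placement of $m$ --- and this is the one point where your proposal diverges from the paper --- is in the \emph{threshold} of the contradiction hypothesis, not in the differential inequality. Set $m=\sup_{s\le0}g(s-r)/g(s)$ and $T_1=\max\{2r,T\}$, and suppose for contradiction that $u_i=\max_J|x|>\ell:=m\|x_{t_0}\|_g$. The delayed diagonal terms force you to control segments $x_{j,u}$ for $u\in[t-\tau_{ii}^k(t),t]$, which may dip below $t_0$ by at most $r$; for such $u\le t_0$ one has $\|x_{j,u}\|_g\le m\|x_{t_0}\|_g=\ell<u_i$ (shifting the history window back by at most $r$ inflates the $g$-weighted sup norm by at most $m$), while for $u\in[t_0,t]$ one has $\|x_{j,u}\|_g\le u_i$ as in Lemma \ref{lem3.1}. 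Hence $\|x_{j,u}\|_g\le u_i$ holds with constant exactly $1$, the inequality reads $x_i'(t)+d_i(t)x_i(t)\le u_i\bigl(c_i^\tau(t)+\sum_j\|L_{ij}(t)\|\bigr)\le d_i(t)u_i$ by (H2), and the contradiction $x_i(t_0)\ge u_i>\ell\ge\|x_{t_0}\|_g$ goes through. (A minor further slip: the term $\sum_{j\ne i}|d_{ij}(t)|$ you carry over from Section 3 does not occur here; in \eqref{M(t)} the matrix $D(t)$ is diagonal and all off-diagonal contributions sit inside $A(t)$.)
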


\begin{proof}  We adapt the proof of Lemma \ref{lem3.1},  stressing  however that the treatment of the delays $\tau_{ii}^k(t)\in[0,r]$  requires special care. Let  $T,v$ be as in (H2) and  $T_1 :=\max \{2r,T\}$.

 Define $m=\sup_{s\le 0}\frac{g(s-r)}{g(s)}.$ Clearly $m\ge 1$. After  rescaling the variables by $\bar x_i(t)=v_i^{-1}x_i(t)\, (1\le i\le n)$,  assume  (H2)  with $v={\bf 1}$ and take the norm
$|x|=\max_{1\le i\le n} |x_i|$. Note that $t-\tau_{ii}^k(t)\ge 0$ for $t\ge r$ and all $i$. Fix $\phi\in {\cal C}$, $ t_0\ge T_1$, consider the solution $x(t)=x(t,t_0,\phi)$ of \eqref{4.1} and  $J=[t_0,t_0+a]$, for any $a>0$. 
We  claim that
 \begin{equation}\label{claim}|x(t)|\le \ell:= m \|x_{t_0}\|_{g}\q {\rm for}\q t\in J.
 \end{equation}
 
 If this assertion fails to be true, 
 there exists $i\in \{1,\dots,n\}$ and $ t_1\in (t_0,t_0+a]$ such that $u_i:=|x_i(t_1)|=|x(t_1)|=\max_{t\in J}|x(t)|>\ell$.
Arguing as in the proof of Lemma \ref{lem3.1}, we derive that
$ \|x_{t}\|_g\le u_i$ for $t\in J.$
 Next, suppose that $x_i(t_1)>0$  (the case $x_i(t_1)<0$ is analogous). 
From (H1*), we obtain 
  \begin{equation}\label{ineq_1}
\begin{split}
x_i'(t)+d_i(t)x_i(t)&= \sum_{k=1}^p d_{ii}^k(t)\Big (x_i(t)-\int_{-\tau_{ii}^k(t)}^0 x_i(t+s)\, d_s\xi_{ii}^k(t,s)\Big )+\sum_{j=1}^n L_{ij}(t)x_{j,t}\\
&= \sum_{k=1}^p d_{ii}^k(t)\int_{-\tau_{ii}^k(t)}^0  (x_i(t)-x_i(t+s))\, d_s\xi_{ii}^k(t,s)+\sum_{j=1}^n L_{ij}(t)x_{j,t}\\
&\le  \sum_{k=1}^p d_{ii}^k(t)\int_{-\tau_{ii}^k(t)}^0 (\int_{t+s}^tx_i'(u)\, du) \, d_s\xi_{ii}^k(t,s)+u_i\sum_{j=1}^n\|L_{ij}(t)\|\\
&= \sum_{k=1}^p d_{ii}^k(t)\int_{t-\tau_{ii}^k(t)}^t x_i'(u)\bigg (\int_{-\tau_{ii}^k(t)}^{u-t} d_s\xi_{ii}^k(t,s)\bigg ) du +u_i\sum_{j=1}^n\|L_{ij}(t)\| \\
&\le \sum_{k=1}^p |d_{ii}^k(t)|\int_{t-\tau_{ii}^k(t)}^t |x_i'(u)|\, du +u_i\sum_{j=1}^n\|L_{ij}(t)\| .
\end{split}
\end{equation}
Using  \eqref{4.2}, we get
  \begin{equation}\label{ineq_2}
\int_{t-\tau_{ii}^k(t)}^t |x_i'(u)|\, du\le  \int_{t-\tau_{ii}^k(t)}^t \bigg (\sum_{l=1}^p |d_{ii}^l(u)|\int_{-\tau_{ii}^l(u)}^0 |x_i(u+s)|\, d_s\xi_{ii}^l(u,s)+\sum_{j=1}^n|L_{ij}(u)x_{j,u}|\bigg )\, du.  
\end{equation}
Consider any $k\in \{1,\dots,p\},j\in \{1,\dots,n\}$. For $t\in J$ and $u\in [t-\tau_{ii}^k(t),t]\subset [t-r,t]$, we now show   that
 \begin{equation}\label{claim1}\|x_{j,u}\|\le u_i.
 \end{equation}
  We separate the cases $u\ge t_0$ and $u<t_0$.

If $u\in [t_0,t]$, for $s\le 0$  we obtain $\frac{|x_j(u+s)|}{g(s)}\le |x_j(u+s)|\le u_i$ if $u+s\in [t_0,t]$, and 
$\frac{|x_j(u+s)|}{g(s)}=\frac{|x_j(t_0+s_1)|}{g(s)}\le \frac{|x_j(t_0+s_1)|}{g(s_1)}\le \|x_{j,t_0}\|_g< u_i$ if $u+s\le t_0$, where $s_1=u+s-t_0$ (note that $s\le s_1\le 0$).

If $u\in [t-\tau_{ii}^k(t),t_0]$, for $s\le 0$ define $s_1=u+s-t_0$, $s_2=u+s-(t_0-r)$.
If $t_0-r\le u+s$,  then $-r\le s_1\le 0$ and $\frac{|x_j(u+s)|}{g(s)}=\frac{|x_j(t_0+s_1)|}{g(s_1)}\frac{g(s_1)}{g(s)}\le m \frac{|x_j(t_0+s_1)|}{g(s_1)}\le m\|x_{j,t_0}\|_g< u_i$.
If $ u+s<t_0-r$ since $s\le s_2\le 0$, we have $\frac{|x_j(u+s)|}{g(s)}=\frac{|x_j(t_0-r+s_2)|}{g(s)}\le \frac{|x_j(t_0-r+s_2)|}{g(s_2)}\le \|x_{j,{t_0-r}}\|_g$. Since  
 $$\|x_{t_0-r}\|_g=\sup_{s\le 0} \frac{|x(t_0-r+s)|}{g(s)}\le m \|x_{t_0}\|,\q t\ge T_1,$$
 again we conclude that  $\frac{|x_j(u+s)|}{g(s)}<u_i$. This proves  \eqref{claim1}.

The above estimates also show that, for all $i,k$ and $t\in J, u\in [t-\tau_{ii}^k(t),t], s\le 0$, we have
$|x_i(u+s)|\le g(s)u_i.$
 Thus, using again \eqref{xi_normal} and the fact that $g$ is nonincreasing, we obtain 
\begin{equation}\label{claim2}\int_{-\tau_{ii}^l(u)}^0 |x_i(u+s)|\, d_s\xi_{ii}^l(u,s)\le g(-\tau_{ii}^l(u))u_i.
 \end{equation}
Inserting \eqref{claim1}, \eqref{claim2} in \eqref{ineq_2} yields
 \begin{equation}\label{ineq_3}
\int_{t-\tau_{ii}^k(t)}^t |x_i'(u)|\, du\le u_i \int_{t-\tau_{ii}^k(t)}^t \bigg (\sum_{l=1}^p |d_{ii}^l(u)g(-\tau_{ii}^l(u))+\sum_{j=1}^n\|L_{ij}(u)\|\bigg )\, du.  \\
\end{equation}
The above inequalities \eqref{ineq_1}, \eqref{ineq_3} and (H2) lead to
$$x_i'(t)+d_i(t)x_i(t) \le u_i\bigg (c_i^\tau (t)+\sum_{j=1}^n\|L_{ij}(t)\|\bigg )\le u_i d_i(t),\q t\in J,$$
 thus
 $x_i(t)\le x_i(t_0)\e^{-\int_0^t d_i(s)\, ds}+u_i(1-\e^{-\int_0^t d_i(s)\, ds})$ for $ t\in J.$
 For $t=t_1$, we derive $x_i(t_0)-u_i\ge 0$, which contradicts the assumption $u_i>\ell$. Hence,
  \eqref{claim} holds and the proof is complete.
\end{proof}

 In a similar way, the arguments presented in  Lemma \ref{lem3.2} can be pursued  for systems  \eqref{4.1}, as follows.
Let $e:\R\to\R^+$  be a  measurable, locally integrable function with $\int_0^\infty e(t)\, dt=\infty$, for which the operators given by \eqref{tildeL}
 are well-defined, $i,j=1,\dots,n$, and
denote $E(t)=\int_0^te(u)\, du$. By
 the change of variables $y(t)=\e^{E(t)}x(t)$,
the linear DDE \eqref{4.1}  is transformed into
\begin{equation}\label{y(t)2} y_i'(t)=e(t) y_i(t)-\sum_{k=1}^p d_{ii}^k(t) \tilde \ell_i^k(t)(y_{i,t})
+\sum_{j} \tilde L_{ij}(t)(y_{j,t}),\  i=1,\dots,n,\ t\ge 0,
\end{equation}
where
$\tilde \ell_i^k(t)(x_{i,t})=\int_{-\tau_{ii}^k(t)}^0 \e^{\int_{t+s}^t e(u)\, du}x_i(t+s)\, d_s\xi_{ii}^k(t,s)$ and $\tilde L_{ij}(t)$ are as  in \eqref{tildeL}. For $k=1,\dots,p$, define $\tilde d_{ii}^k(t)=d_{ii}^k(t)\int_{-\tau_{ii}^k(t)}^0 \e^{\int_{t+s}^t e(u)\, du}\, d_s\xi_{ii}^k(t,s)$, so that $ d_{ii}^k(t)\tilde \ell_i^k(t)(1)=\tilde d_{ii}^k(t)$; note that $|\tilde d_{ii}^k(t)|\le \e^{\int_{t-\tau_{ii}^k(t)}^t e(u)\, du} |d_{ii}^k(t)|$. For this system, we consider 
\begin{equation}\label{M(t)2}
\tilde M(t)=\tilde D(t)-\tilde C^\tau(t)-\tilde A(t),\q t\ge 0,
\end{equation}
with the matrices $\tilde D(t),\tilde C^\tau (t), \tilde A(t)$ defined according to the notation in \eqref{4.3}. We have $\tilde D(t)=\sum_{k=0}^p \tilde D_k(t)$ with  $\tilde D_0(t)=- e(t)I$ for $I$  the $n\times n$  identity matrix and  $\tau_{ii}^0(t)\equiv 0$ and $\tilde D_k(t)=\diag\, (\tilde d_{11}^k(t),\dots,\tilde d_{nn}^k(t))$ for $k=1,\dots,p$, $\tilde A(t)=\Big [\|\tilde L_{ij}(t)\|\Big ]$ and $\tilde {C^\tau} (t)=\diag (\tilde {c_1^\tau} (t),\dots, \tilde {c_n^\tau} (t)) $ with $\tilde {c_i^\tau} (t)\le  {\dbtilde {c_i^\tau}} (t)$, for
$$   {\dbtilde {c_i^\tau}} (t):=\sum_{k=1}^p \e^{\int_{t-\tau_{ii}^k(t)}^t e(u)\, du} |d_{ii}^k(t)|\int_{t-\tau_{ii}^k(t)}^t \bigg (e(s)+
\sum_{l=1}^p   \e^{\int_{t-\tau_{ii}^l(t)}^t e(u)\, du}|d_{ii}^l(s)|g(-\tau_{ii}^l(s))+\sum_{j=1}^n \|\tilde L_{ij}(s)\|\bigg )ds.
 $$
  If the transformed matrix $\tilde M(t)$ satisfies (H2), from Lemma \ref{lem4.1} and reasoning along the lines of the proof of Lemma \ref{lem3.2}, we deduce that the original system  \eqref{4.1} is asymptotically stable. Moreover, if one can choose $e(t)=\de$ for some $\de>0$ and $\tilde M(t)$ still satisfies (H2), the stability is exponential.

In an analogous way,  Theorems \ref{thm3.1}  can now be adapted to the present setting.


\begin{thm}\label{thm4.1}  
Consider system  \eqref{4.1} in ${\cal C}$. Assume  (H1*), (H3)  and one of the following conditions:
 
(i) the functions $d_{ii}^k(t), \| L_{ij}(t)\|$ are all bounded on $\R^+$, $ i,j=1,\dots,n, k=1,\dots,p$, and the matrix $M(t)$ in \eqref{M(t)} satisfies   (H4);

(ii)  $\liminf_{t\to\infty}d_i(t)>0$ for $i=1,\dots,n$, and there exist $\al>1,T\ge 0$ and a vector $v>0$ such that
$D(t)v\ge \al (C^{\tau} (t)+A(t))v$ for $t\ge T$.

  Then \eqref{4.1} is exponentially asymptotically stable.  \end{thm}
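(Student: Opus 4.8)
The plan is to reduce Theorem \ref{thm4.1} to the already-established machinery by mimicking the transformation used for Theorem \ref{thm3.1}, but now working with the system \eqref{4.1} that carries pure diagonal delays rather than a dominant instantaneous feedback. The central idea is the exponential change of variables $y(t)=\e^{E(t)}x(t)$ with $E(t)=\int_0^t e(u)\,du$, and the function $e(t)$ is to be chosen as a suitably small constant $e(t)\equiv \de>0$ so that the transformed matrix $\tilde M(t)$ in \eqref{M(t)2} still satisfies (H2). Once that is achieved, the combination of Lemma \ref{lem4.1} (applied to the transformed system \eqref{y(t)2}) and the observation already recorded after \eqref{M(t)2} — that if $\tilde M(t)$ satisfies (H2) with a constant $e(t)=\de$ then the original system is exponentially asymptotically stable — finishes the argument. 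So the whole task amounts to verifying, in each of the two cases, that (H3) permits choosing $\de>0$ small enough to preserve the diagonal-dominance inequality after the exponential weighting inflates the delayed coefficients by factors $\e^{\int_{t-\tau_{ii}^k(t)}^t \de\,du}$.

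First I would treat case (ii), which is the cleaner one. Here (H1*), together with $\liminf_{t\to\infty} d_i(t)>0$ and the strict dominance $D(t)v\ge \al(C^\tau(t)+A(t))v$ for some $\al>1$, gives headroom of the same type exploited in the proof of Theorem \ref{thm3.1}(ii). After scaling to $v=\mathbf 1$ (Remark \ref{rmk3.1}), the goal is to show that for $\de>0$ small the transformed diagonal term $\tilde D(t)=-\de I+\sum_{k}\tilde D_k(t)$ minus $\tilde C^\tau(t)$ and $\tilde A(t)$ is still diagonally dominant. Since the delays $\tau_{ii}^k(t)$ are bounded by $r$, each inflation factor $\e^{\int_{t-\tau_{ii}^k(t)}^t\de\,du}\le \e^{\de r}$ tends to $1$ as $\de\to 0^+$; meanwhile Lemma \ref{lem3.3} (available because (H3) holds) lets me pick $\de$ so that the weighted distributed tails $\tilde A(t)$ stay within a factor $(1+\eta)$ of $A(t)$ with $1+\eta<\al$. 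The entries of $\tilde C^\tau(t)$, bounded above by $\dbtilde{c_i^\tau}(t)$, are products of the bounded quantities $|d_{ii}^k(t)|$ and integrals over intervals of length at most $r$ of the $\de$-inflated coefficients; because $\liminf d_i(t)>0$ guarantees the correction is of the right order, one sees that $\dbtilde{c_i^\tau}(t)\to c_i^\tau(t)$ controlled terms as $\de\to0^+$. Assembling these estimates, the strict $\al$-gap absorbs the $\de$-loss and the $(1+\eta)$-inflation, yielding $\tilde M(t)\mathbf 1\ge 0$ for $\de$ small, which is exactly (H2) for the transformed system.

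For case (i) the structure is the same but one uses (H4) instead of (H5): with the uniform boundedness of $d_{ii}^k(t)$ and $\|L_{ij}(t)\|$, Remark \ref{rmk3.2} shows (H4) upgrades to the strict form needed, and the additive margin $u>0$ in $M(t)v\ge u$ plays the role of the multiplicative $\al$-gap. The boundedness hypotheses are precisely what make the inflation factors and the $\tilde C^\tau(t)$ correction uniformly small, so that $M(t)\mathbf 1\ge u$ degrades to $\tilde M(t)\mathbf 1\ge 0$ for $\de$ small; again Lemma \ref{lem3.3} controls the distributed part via (H3). In both cases, once $\tilde M(t)$ satisfies (H2) with the constant weight $e(t)=\de$, the remark following \eqref{M(t)2} delivers exponential asymptotic stability.

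The main obstacle I anticipate is the careful bookkeeping of the correction term $\tilde C^\tau(t)$, i.e.\ the quantity $\dbtilde{c_i^\tau}(t)$ defined after \eqref{M(t)2}. Unlike the instantaneous-feedback setting of Section 3, where $C^\tau\equiv 0$, here $\tilde C^\tau(t)$ is a genuine nonlinearity in $\de$ (each summand contains the weight $\e^{\int\de}$ both in front and inside the integral, plus an extra $e(s)=\de$ term from the $-e(t)I$ shift). The delicate point is to confirm that as $\de\to 0^+$ this whole expression converges to the original $c_i^\tau(t)$ uniformly enough that the strict dominance margin — whether the multiplicative $\al$ from (ii) or the additive $u$ from (i) — is not overwhelmed; this is where the uniform bounds in (i), respectively the $\liminf d_i(t)>0$ in (ii), together with the bounded delays $\tau_{ii}^k(t)\le r$ and the $g(-\tau_{ii}^l(u))$ factors, must all be combined. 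Because the argument is a direct analogue of the proof of Theorem \ref{thm3.1} applied through Lemma \ref{lem4.1}, and since the crucial limiting behavior $\dbtilde{c_i^\tau}(t)\to c_i^\tau(t)$ as $\de\to0^+$ is transparent from the explicit formula, I expect the detailed verification to be routine and would present it compactly, emphasizing only the choice of $\de$ and $\eta$ and the final diagonal-dominance inequality.
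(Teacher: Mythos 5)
Your proposal follows essentially the same route as the paper's own proof: the constant-weight change of variables $y(t)=\e^{\de t}x(t)$, Lemma \ref{lem3.3} (via (H3)) to keep $\tilde A(t)$ within a factor $1+\eta$ of $A(t)$, the bound $\tau_{ii}^k(t)\le r$ to make the inflation factors $\e^{\de r}$ and the perturbation of $\dbtilde{c_i^\tau}(t)$ tend to the unweighted quantities as $\de\to 0^+$, and the additive margin from (H4) in case (i) versus the multiplicative $\al$-gap (with $\liminf d_i(t)>0$ absorbing the extra $\de$ terms) in case (ii), concluding via the remark after \eqref{M(t)2} and Lemma \ref{lem4.1}. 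The bookkeeping you flag for $\dbtilde{c_i^\tau}(t)$ is exactly what the paper carries out, arriving at bounds of the form $(1+\eta)^2 c_i^\tau(t)+\de r\e^{\de r}\sum_k|d_{ii}^k(t)|$ in case (i) and $(1+\eta)^3 c_i^\tau(t)$ in case (ii), so your plan is correct and matches the paper's argument.
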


   \begin{proof}  
(i) Take $v={\bf 1}$ in (H4), so that 
  there exists $m>0$ such that $d_i(t)-c_i^\tau(t)-\sum_{j}\| L_{ij}(t)\|\ge m$, for $ i=1,\dots,n$ and $t\ge T$.  Since all the coefficients  are bounded, there exists $\eta>0$ such that, for $t$ sufficiently large, 
\begin{equation}\label{4.4_0} 
d_i(t)-\eta-(1+\eta)^2\Big [c_i^\tau(t)+\sum_{j}\|L_{ij}(t)\|\Big ]>0,\q i=1,\dots,n.
 \end{equation}
 Fix  $\eta>0$ as above. By Lemma \ref{lem3.3}, choose $\de\in (0,\eta)$ such that \eqref{3.8} holds.
From the  computations above, after the change of variables $y(t)=\e^{\de t}x(t)$, for $\tilde M(t)$ in \eqref{M(t)2} we have
 $$\tilde {c_i^\tau} (t)\le  {\dbtilde {c_i^\tau}} (t):=\sum_{k=1}^p \e^{\de \tau_{ii}^k(t)} |d_{ii}^k(t)|\int_{t-\tau_{ii}^k(t)}^t \left (\de+\sum_{l=1}^p 
 \e^{\de \tau_{ii}^l(s)} |d_{ii}^l(s)|g(-\tau_{ii}^l(s))+(1+\eta)\sum_{j=1}^n \|L_{ij}(s)\|\right )ds.$$
 Since  $\tau_{ii}^k(t)\le r$,  we may suppose that $\de>0$ is chosen so that 
 $\de \Big (1+r\e^{\de r}\sum_{k=1}^p |d_{ii}^k(t)|\Big )<\eta$ and $ \e^{\de \tau_{ii}^k(t)}\le \e^{\de r}<1+\eta$ for all $i,k$ and $t\ge 0$. We deduce that $ \tilde {\tilde {c_i^\tau}}  (t)\le (1+\eta)^2 c_i^\tau(t)+\de r\e^{\de r}\sum_{k=1}^p |d_{ii}^k(t)|$. From \eqref{4.4_0}, it follows that
 $(\tilde M(t){\bf 1})_i=d_i(t)-\de -\tilde {c_i^\tau} (t)-\sum_{j=1}^n \|\tilde L_{ij}(t)\|>0$ for $ t\ge T,\, i=1,\dots,n,$
so the matrix $\tilde M(t)$  satisfies (H2). The conclusion follows.

(ii) As above, take $v={\bf 1}$ in the inequality $D(t)v\ge \al (C^{\tau} (t)+A(t))v$, and consider $m>0$ such that 
$d_i(t)\ge m$ for $t\ge T$ and $i=1,\dots,n$. Take $\eta>0$ with $1+\eta<\al$, where $\al>1$ is such that
 \begin{equation}\label{4.4} 
d_i(t)\ge \al\Big [c_i^\tau(t)+\sum_{j}\|L_{ij}(t)\|\Big ],\q i=1,\dots,n,
 \end{equation}
and by Lemma \ref{lem3.3}, choose $\de\in (0,\eta)$ such that \eqref{3.8} holds and
 $$\de \e^{-\de r} \le \eta m,\q \de \le m[1-(1+\eta^3)\al^{-1}],\q \e^{\de \tau_{ii}^k(t)}\le \e^{\de r}<1+\eta.$$
 Note also that $\sum_{k=1}^p  |d_{ii}^k(t)|g(-\tau_{ii}^l(s))\ge \sum_{k=1}^p  |d_{ii}^k(t)|\ge d_i(t)\ge m$ for $t\ge T$.
 For ${\dbtilde {c_i^\tau}} (t)$ as above, after the change of variables $y(t)=\e^{\de t}x(t)$, for $\tilde M(t)$ in \eqref{M(t)2} we have $ \tilde {c_i^\tau} (t)\le {\dbtilde {c_i^\tau}} (t)$ with
 \begin{equation*}
 \begin{split}
  {\dbtilde {c_i^\tau}} (t):
 &\le \e^{\de r}\sum_{k=1}^p |d_{ii}^k(t)|\int_{t-\tau_{ii}^k(t)}^t \left (\de + \e^{\de r}\sum_{l=1}^p 
|d_{ii}^l(s)|g(-\tau_{ii}^l(s))+(1+\eta)\sum_{j=1}^n \|L_{ij}(s)\|\right )ds\\
&\le \e^{\de r}\sum_{k=1}^p |d_{ii}^k(t)|\int_{t-\tau_{ii}^k(t)}^t \left ((1+\eta) \e^{\de r}\sum_{l=1}^p 
|d_{ii}^l(s)|g(-\tau_{ii}^l(s))+(1+\eta)\sum_{j=1}^n \|L_{ij}(s)\|\right )ds\\
 &\le (1+\eta)^3 c_i^\tau(t).
 \end{split}
 \end{equation*}
  From \eqref{4.4} and the above choice of $\de$, for $ t\ge T,\, i=1,\dots,n$ we deduce that
 \begin{equation*}
 \begin{split}(\tilde M(t){\bf 1})_i&=d_i(t)-\de -\tilde {c_i^\tau} (t)-\sum_{j=1}^n \|\tilde L_{ij}(t)\|\\
 &\ge d_i(t)-\de -(1+\eta)^3 c_i^\tau (t) -(1+\eta) \sum_{j=1}^n \| L_{ij}(t)\|\\
 &\ge d_i(t)[1-(1+\eta)^3 \al^{-1}] -\de\ge 0,
  \end{split}
 \end{equation*}
 thus  the matrix $\tilde M(t)$  satisfies (H2).
\end{proof}

\begin{rmk}\label{rmk4.2}  {\rm Note that when all the functions  $d_{ii}^k(t), \| L_{ij}(t)\|,\tau_{ii}^k (t)$ are bounded on $\R^+$ and $M(t)$ in \eqref{M(t)} satisfies (H4), then $D(t)v\ge \al (C^{\tau} (t)+A(t))v$ for $t\ge T$  (cf.~(H5) and Remark \ref{rmk3.2}).
}\end{rmk}

\begin{cor}\label{cor4.1}  Consider   the scalar linear equation
\begin{equation}\label{4.6}
x'(t)=-\sum_{k=1}^p d_k(t)\int_{-\tau_k(t)}^0x(t+s)\, d_s\xi_k(t,s)+L_0(t) x_t,\q t\ge 0,
\end{equation}
where  $ \tau_k(t),d_k(t)$ are continuous, $\tau_k(t)\ge 0$ and bounded on $\R^+$, $L_0(t)\var=\be(t) \int_{-\infty}^0\var (s)\, d_s\nu(t,s)$
for $ (t,\var)\in [0,\infty)\times  C_g^0(\R)$, with  $\be(t)=\|L_0(t)\|$,  $s\mapsto\nu(t,s)\in M_g((-\infty,0];\R)$ and  $\nu(t,s)$ satisfying (H3),
$s\mapsto \xi_k(t,s)$ are nondecreasing and such that $\xi_k(t,0)-\xi_k(t, -\tau_k(t))=1$,  and  $ \xi_k(t,s),\nu(t,s)$   are continuous in $t$,   for $t\in\R^+,1\le k\le p$.  In addition, suppose that one of the following conditions holds:

(i) $ d_k(t),\be(t)$ are bounded, on $\R^+,1\le k\le p$, and    there exist $T>0,\vare>0$  such that
\begin{equation}\label{scalarH5}\sum_{k=1}^p d_k(t)\ge\vare+
\sum_{k=1}^p |d_k(t)|\int_{t-\tau_k(t)}^t   \left ( \sum_{l=1}^p |d_l(s)|g(-\tau_l(s))+\be(s)\right )ds+\be(t),\q t\ge T;
\end{equation}

(ii) $\liminf_{t\to\infty} \sum_{k=1}^p d_k(t)>0$ and there are $\al >1$ and $T\ge 0$ such that
\begin{equation}\label{scalarH5_2}\sum_{k=1}^p d_k(t)\ge\al\left [
\sum_{k=1}^p |d_k(t)|\int_{t-\tau_k(t)}^t   \left ( \sum_{l=1}^p |d_l(s)|g(-\tau_l(s))+\be(s)\right )ds+\be(t)\right],\q t\ge T.\end{equation}
Then \eqref{4.6} is exponentially asymptotically stable.   
\end{cor}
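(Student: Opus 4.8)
The plan is to recognize that equation \eqref{4.6} is precisely the scalar case $n=1$ of the general system \eqref{4.1} (equivalently \eqref{4.2}), and then to invoke Theorem \ref{thm4.1} directly. With $n=1$ there are no off-diagonal terms, so the diagonal finite-delay part $-\sum_{k=1}^p d_k(t)\int_{-\tau_k(t)}^0 x(t+s)\,d_s\xi_k(t,s)$ plays the role of $-\sum_k d_{ii}^k(t)\ell_i^k(t)x_{i,t}$, while the single infinite-delay term $L_0(t)x_t=\beta(t)\int_{-\infty}^0 x(t+s)\,d_s\nu(t,s)$ is exactly the operator $L_{11}(t)$, with $a_{11}(t)=\|L_0(t)\|=\beta(t)$.

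First I would check that the standing assumptions of the corollary furnish hypotheses (H1*) and (H3) for this scalar system. The continuity in $t$ of $d_k,\tau_k,\xi_k,\nu$, the normalization $\xi_k(t,0)-\xi_k(t,-\tau_k(t))=1$ with $s\mapsto\xi_k(t,s)$ nondecreasing, and the boundedness $\tau_k(t)\le r$ together give (H1*)(i)--(ii); the positivity of $d(t)=\sum_k d_k(t)$ needed for (H1*)(iii) holds for $t$ large under either hypothesis (from $d(t)\ge\varepsilon$ in case (i), and from $\liminf_{t\to\infty}d(t)>0$ in case (ii)), which suffices since stability is understood on some interval $[\al,\infty)$. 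The assumption that $\nu$ satisfies (H3) is carried over verbatim.

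Next I would specialize the matrices in \eqref{4.3}--\eqref{M(t)} to $n=1$: here $D(t)=d(t)$, $A(t)=\beta(t)$, and
$$c^\tau(t)=\sum_{k=1}^p|d_k(t)|\int_{t-\tau_k(t)}^t\Big(\sum_{l=1}^p|d_l(u)|g(-\tau_l(u))+\beta(u)\Big)\,du,$$
so that $M(t)=d(t)-c^\tau(t)-\beta(t)$. Comparing with the statement, condition (i) of the corollary is exactly $M(t)\ge\varepsilon$, i.e.\ (H4) with $v={\bf 1}$ and $u=\varepsilon$, together with the boundedness of $d_k,\beta$; this is condition (i) of Theorem \ref{thm4.1}. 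Likewise, condition (ii) of the corollary reads $d(t)\ge\al\big(c^\tau(t)+\beta(t)\big)$, which is the inequality $D(t)v\ge\al(C^\tau(t)+A(t))v$ with $v={\bf 1}$, paired with $\liminf_{t\to\infty}d(t)>0$; this is condition (ii) of Theorem \ref{thm4.1}. In either case Theorem \ref{thm4.1} yields the exponential asymptotic stability of \eqref{4.6}, with no need to re-run the change-of-variables argument (via Lemma \ref{lem3.2}) underlying that theorem.

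The only genuine work, and the step I would be most careful about, is the bookkeeping verifying that the scalar reductions of $C^\tau(t)$ and $M(t)$ agree exactly with the integral expressions appearing in \eqref{scalarH5} and \eqref{scalarH5_2}; once this identification is confirmed the conclusion is immediate. I do not anticipate any deeper obstacle, since the corollary is a faithful specialization of the $n$-dimensional theorem rather than a strengthening of it.
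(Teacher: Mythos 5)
Your proposal is correct and is exactly the argument the paper intends: the corollary is stated without proof precisely because it is the case $n=1$ of Theorem \ref{thm4.1}, with \eqref{scalarH5} being (H4) for the scalar $M(t)=d(t)-c^\tau(t)-\be(t)$ (plus boundedness) and \eqref{scalarH5_2} being the inequality $D(t)v\ge\al(C^\tau(t)+A(t))v$ (plus $\liminf_{t\to\infty}d(t)>0$). Your bookkeeping of $c^\tau(t)$ and your remark that positivity of $d(t)$ for large $t$ suffices, given that stability is understood on some interval $[\al,\infty)$, are both accurate.
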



The case of linear DDEs  with only bounded delays, either  discrete or distributed,  is now addressed.

\begin{cor}\label{cor4.2}   Consider  the linear DDE in $C([-r,0];\R^n)$  
\begin{equation}\label{4.5}
x_i'(t)=-\sum_{j=1}^n\sum_{k=1}^p d_{ij}^k(t)\int_{-\tau_{ij}^k(t)}^0 x_j(t+s)\, d_s\xi_{ij}^k(t,s),\q t\ge 0, i=1,\dots,n,
\end{equation}
and assume that:

(h1)  $d_{ij}^k:\R^+\to\R,\tau_{ij}^k:\R^+\to\R^+$ are continuous,   and $\xi_{ij}^k(t,s)$ are  continuous on $t\in \R^+$,  of bounded variation on $s\in [-\tau_{ij}^k(t),0]$, with ${\rm Var}_{s\in [-\tau_{ij}^k(t),0]} \xi_{ij}^k(t,s)=1$,
for $i,j=1,\dots,n, k=1,\dots,p$;

(h2)
$s\mapsto \xi_{ii}^k(t,s)$ is nondecreasing for $ t\ge 0, 1\le i\le n, 1\le k\le p;$

 (h3) $d_i(t):=\sum_{k=1}^pd_{ii}^k(t)>0$  for $t\ge T_0$ and all $i,k$, for some $T_0\ge 0$;
 
 (h4) $\tau_{ij}^k(t)\in [0,r]$ for $ t\ge 0, 1\le i,j\le n, 1\le k\le p$;
 
 (h5)  all functions $d_{ij}^k$ are bounded on $\R^+$.\\
Define the matrices 
\begin{equation}\label{M(t)3}
\widehat D(t)=\big [ \widehat d_{ij}(t)\big ],\q
 C^\tau(t)=\diag\, (c_1^\tau (t),\dots, c_n^\tau(t)),
  \end{equation}  where
$$  \widehat d_{ij}(t) =\system{&-\sum_{k=1}^p|d_{ij}^k(t)|,&\ i\ne j\cr  &\q d_i(t),&\ i=j\cr},\  %
c_i^\tau (t)
= \sum_{k=1}^p |d_{ii}^k(t)|\int_{t-\tau_{ii}^k(t)}^t \sum_{j=1}^n\sum_{l=1}^p  |d_{ij}^l(u)|\, du\ (1\le i\le n).$$
If  $M(t):= \widehat D(t)- C^\tau(t)$
  satisfies (H4), then \eqref{4.5} is exponentially asymptotically stable.  
  In parti\-cular, under the above conditions, the system
\begin{equation}\label{Ber}
x_i'(t)=-\sum_{j=1}^n\sum_{k=1}^p d_{ij}^k(t)x_j(t-\tau_{ij}^k(t)),\q t\ge 0, i=1,\dots,n,
\end{equation}
is exponentially asymptotically  stable.
\end{cor}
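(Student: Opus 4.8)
The plan is to realize \eqref{4.5} as a special instance of the general system \eqref{4.1}, to which Theorem \ref{thm4.1}(i) applies; the only genuinely new point is a judicious choice of phase space that makes the matrix $\widehat D(t)-C^\tau(t)$ displayed in the corollary dominate the matrix built from $g$-weighted operator norms in \eqref{M(t)}. Since every delay satisfies $\tau_{ij}^k(t)\le r$ by (h4), I would choose a weight $g$ obeying (g) with $g(s)\equiv 1$ on $[-r,0]$, as allowed by the finite-delay discussion in Section 2, and work in $C_g^0(\R^n)$, into which $C([-r,0];\R^n)$ embeds with $\|\cdot\|_\infty=\|\cdot\|_g$ on the relevant subspace. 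The decisive consequence is that $g(-\tau_{ij}^k(t))=1$ for every delay, so the $g$-weighted operator norms reduce to plain total variations: writing \eqref{4.5} in the form \eqref{4.1} with diagonal functionals $\ell_i^k(t)\varphi=\int_{-\tau_{ii}^k(t)}^0\varphi(s)\,d_s\xi_{ii}^k(t,s)$ and off-diagonal operators $L_{ij}(t)\varphi=-\sum_k d_{ij}^k(t)\int_{-\tau_{ij}^k(t)}^0\varphi(s)\,d_s\xi_{ij}^k(t,s)$ (with $L_{ii}\equiv 0$), one obtains $\|L_{ij}(t)\|\le\sum_k|d_{ij}^k(t)|$.

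Next I would verify the hypotheses of Theorem \ref{thm4.1}(i). Condition (H1*) follows from (h1)--(h3): the normalization \eqref{xi_normal} is automatic, since a nondecreasing $\xi_{ii}^k(t,\cdot)$ of total variation $1$ has total increase $\xi_{ii}^k(t,0)-\xi_{ii}^k(t,-\tau_{ii}^k(t))=1$. Hypothesis (H3) is vacuous here because no term carries an unbounded delay (cf.~Remark \ref{rmk3.2.0}), and the boundedness requirement in Theorem \ref{thm4.1}(i) holds by (h5) together with $\|L_{ij}(t)\|\le\sum_k|d_{ij}^k(t)|$.

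The core of the argument is then an entrywise comparison of the matrix $M(t)=D(t)-C^\tau(t)-A(t)$ of \eqref{M(t)} with the corollary's matrix $\widehat D(t)-C^\tau(t)$. With $g\equiv 1$ on $[-r,0]$ the diagonal correction $c_i^\tau(t)$ of \eqref{4.3} is bounded above by the $c_i^\tau(t)$ displayed in the corollary, because $g(-\tau_{ii}^l(u))=1$ and $\sum_j\|L_{ij}(u)\|=\sum_{j\ne i}\|L_{ij}(u)\|\le\sum_{j\ne i}\sum_l|d_{ij}^l(u)|$; meanwhile $-\|L_{ij}(t)\|\ge-\sum_k|d_{ij}^k(t)|=\widehat d_{ij}(t)$ for $i\ne j$. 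Hence the matrix of \eqref{M(t)} dominates the corollary's matrix entrywise, and since $v>0$ the assumption that $\widehat D(t)-C^\tau(t)$ satisfies (H4) forces the matrix of \eqref{M(t)} to satisfy (H4) as well. Theorem \ref{thm4.1}(i) then delivers the exponential asymptotic stability of \eqref{4.5}. I expect the main obstacle to be exactly this reconciliation of two differently defined matrices both named $M(t)$: the corollary uses the raw sums $\sum_k|d_{ij}^k(t)|$ and unweighted integrals, while \eqref{4.3} and \eqref{M(t)} carry $g$-weighted norms and factors $g(-\tau_{ii}^l(u))$; the weight $g\equiv 1$ on $[-r,0]$ collapses these weights to $1$ and is the step that must be justified with care rather than any hard analytic estimate.

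Finally, \eqref{Ber} is the special case of \eqref{4.5} in which each $\xi_{ij}^k(t,\cdot)$ is the unit jump at $-\tau_{ij}^k(t)$, so that $\int_{-\tau_{ij}^k(t)}^0 x_j(t+s)\,d_s\xi_{ij}^k(t,s)=x_j(t-\tau_{ij}^k(t))$. The diagonal jump functions are nondecreasing of variation $1$, hence all of (h1)--(h5) are inherited and the statement for \eqref{Ber} follows at once.
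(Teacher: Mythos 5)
Your proposal is correct and follows exactly the route the paper intends (the corollary is stated without proof, as a direct consequence of Theorem \ref{thm4.1}(i)): embed $C([-r,0];\R^n)$ into $C_g^0$ with $g\equiv 1$ on $[-r,0]$ as set up in Sections 2 and 4, put the off-diagonal terms into $L_{ij}(t)$ with $\|L_{ij}(t)\|\le\sum_k|d_{ij}^k(t)|$, and observe that the resulting matrix $M(t)$ of \eqref{M(t)} dominates $\widehat D(t)-C^\tau(t)$ entrywise, so (H4) transfers. All the hypothesis checks ((H1*) via the variation-equals-increase remark, (H3) trivially for bounded delays, boundedness from (h5)) are handled correctly.
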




\begin{rmk} {\rm   In a recent paper, with a technique  which makes uses of Bohl-Perron theorem and matrix norms,  Berezansky et al. \cite{BDSS18} gave sufficient conditions for the exponential asymptotic stability for linear systems  of the form
\eqref{Ber}, with  {\it bounded  discrete} delays and all coefficients {\it essentially bounded}  (although the more general framework of measurable, locally integrable functions  was considered).  However, in \cite{BDSS18} not only the analysis   is restricted to the case of discrete delays, but the hypotheses are stronger then the requirements in Corollary \ref{cor4.2}: for \eqref{Ber}, in \cite[Theorem 5]{BDSS18}  it is assumed  that:  (i)  $d_i(t)$ are bounded away below from 0,  (ii) there exists an autonomous matrix $N$ such that the matrix $N(t):=\Big [m_{ij}(t)/d_{i}(t)\Big ]$, where $M(t)=\Big [m_{ij}(t)\Big ]=\widehat D(t)-C^\tau(t)$ for the  matrices in \eqref{M(t)3}, satisfies $N(t)\ge N$ for $t\ge 0$, and (iii) $I-N$ is a non-singular M-matrix. We emphasize that this latter condition is equivalent to saying that $v-Nv\ge u$ for some positive vectors $u,v$, which is more restrictive than saying that $M(t)$ satisfies (H4). The approach in \cite{BDSS18} was extended  most recently by the same authors in  \cite{BDSS19}, where  new criteria for the exponential asymptotic stability  of system  \eqref{Ber} depending on all the delays $\tau_{ij}(t)$ were given.
An interesting open question is how to generalize the results in  \cite{BDSS19} to DDEs with distributed, and possibly unbounded, delays.
}\end{rmk}

Several  versions of Theorem \ref{thm3.4} can be stated  for the present framework.   To avoid repetitions and keep this manuscript in a reasonable size, in the formulation  below we assume that all the diagonal delays are bounded and all diagonal coefficients $d_{ii}^k(t)$ are nonnegative.

\begin{thm}\label{thm4.2}  For   \eqref{4.5}, and with the notation in the above corollary, assume (h1),(h2) and

(h3') all functions $d_{ii}^k$ are nonnegative on $\R^+$, with $d_i(t):=\sum_{k=1}^p d_{ii}^k(t)>0$;

(h4') $\tau_{ii}^k(t)\in [0,r]$ for  $ t\ge 0, 1\le i\le n, 1\le k\le p$.\\
Let 
$$M(t)= D(t)- \widehat C^\tau(t)- \widehat A^\tau (t)$$
  where
  $D(t)=\diag (d_1(t),\dots,d_n(t)), \widehat A^\tau (t)=\big [\widehat a_{ij}^\tau(t)\big ],
 \widehat C^\tau(t)=\diag\,( \widehat c_1^\tau(t),\dots, \widehat c_n^\tau(t)),$
 with,  for $1\le i,j\le n$ and $1\le k\le p$,
\begin{equation*}
\begin{split}
\widehat a_{ij}^\tau (t) &=\system{&\sum_{k=1}^p g(-\tau_{ij}^k(s))|d_{ij}^k(t)|,&\ i\ne j\cr  &\q 0,&\ i=j\cr}, %
\widehat c_i^\tau (t)
= \sum_{k=1}^p d_{ii}^k(t)\int_{t-\tau_{ii}^k(t)}^t \sum_{l=1}^p \sum_{j=1}^ng(-\tau_{ij}^l(u))|d_{ij}^l(u)|\, du.
\end{split}
\end{equation*} 
Assume  that there exist $\al>1,T\ge 0$ and a vector $v>0$  and a  measurable, locally integrable function $e:\R\to\R^+$  such that:

(i) $\min_{1\le i\le n}\left (M(t)]v\right)_i\ge e(t),t\ge T$;

(ii) $D(t)v\ge \al (\widehat C^{\tau} (t)+\widehat A^\tau(t)), t\ge T$;

(iii)   $\int_0^\infty e(t)\, dt=\infty$;

(iv) $\sup_{t\ge T}\int_{t-\tau(t)}^t e(u)\, du<\infty$,  for $\tau(t)=\max\{ \tau_{ij}^k(t): 1\le i,j\le n,1\le k\le p\}$.
\\
Then
\eqref{4.5} is asymptotically  stable. \end{thm}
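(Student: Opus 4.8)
The plan is to parallel the proof of Theorem~\ref{thm3.4}, performing the weighted substitution $y(t)=\e^{E(t)}x(t)$ with $E(t)=\de\inte{t}{0}{e(u)}{u}$ for a small parameter $\de>0$, so that \eqref{4.5} turns into a system of the form \eqref{4.1} whose associated matrix is the $\tilde M(t)$ of \eqref{M(t)2} (computed with $e$ replaced by $\de e$). The whole argument then reduces to choosing $\de$ small enough that $\tilde M(t)$ satisfies (H2); once this is done, Lemma~\ref{lem4.1} bounds the transformed solutions and the attractivity argument recorded after that lemma applies. Before that, I would note that \eqref{4.5} is already stable: taking $v={\bf 1}$ after the scaling of Remark~\ref{rmk3.1}, condition (i) reads $\big(M(t){\bf 1}\big)_i=d_i(t)-\widehat c_i^\tau(t)-\sum_{j}\widehat a_{ij}^\tau(t)\ge e(t)\ge 0$, so $M(t)$ satisfies (H2) and Lemma~\ref{lem4.1} gives uniform stability; only global attractivity remains.

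Write $P_i(t):=\widehat c_i^\tau(t)+\sum_j\widehat a_{ij}^\tau(t)$. The crucial gain comes from (h2)--(h3'): since each $d_{ii}^k\ge 0$ and $\xi_{ii}^k(t,\cdot)$ is nondecreasing, the weight $\e^{\de\int_{t+s}^t e\,du}\ge 1$ forces $\tilde d_{ii}^k(t)\ge d_{ii}^k(t)$ and hence $\tilde d_i(t)\ge d_i(t)$, so the transformation can only strengthen the instantaneous damping. On the cost side, (iv) yields a constant $K$ with $\int_{t-\tau(t)}^t e\,du\le K$, whence every exponential weight is bounded by $C_\de:=\e^{\de K}\to 1$ as $\de\to 0^+$; in particular $\sum_j\|\tilde L_{ij}(t)\|\le C_\de\sum_j\widehat a_{ij}^\tau(t)$. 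Feeding these bounds into $\tilde c_i^\tau(t)\le\dbtilde{c_i^\tau}(t)$ and separating the extra $\de e(s)$ term generated by the substitution, I expect an estimate of the form $\tilde c_i^\tau(t)\le C_\de^2\,\widehat c_i^\tau(t)+\de C_\de K\,d_i(t)$, where the last summand is controlled because $\int_{t-\tau_{ii}^k(t)}^t e\,ds\le K$ and $\sum_k d_{ii}^k(t)=d_i(t)$.

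Collecting the pieces, $\big(\tilde M(t){\bf 1}\big)_i=\tilde d_i(t)-\de e(t)-\tilde c_i^\tau(t)-\sum_j\|\tilde L_{ij}(t)\|\ge\big(1-\de C_\de K-C_\de^2\al^{-1}\big)d_i(t)-\de e(t)$, using (ii) in the form $P_i(t)\le\al^{-1}d_i(t)$. Because $P_i(t)\ge 0$, condition (i) also gives $d_i(t)\ge d_i(t)-P_i(t)\ge e(t)$, so the right-hand side is at least $\big(1-\de C_\de K-C_\de^2\al^{-1}-\de\big)e(t)$. Since $\al>1$, the bracket tends to $1-\al^{-1}>0$ as $\de\to0^+$, so for all sufficiently small $\de$ we obtain $\big(\tilde M(t){\bf 1}\big)_i\ge 0$ for $t$ large, i.e.\ $\tilde M(t)$ satisfies (H2). (The transformed diagonal coefficients $\tilde d_{ii}^k\ge 0$ together with the zero-delay term $-\de e(t)$ keep the equation within the class \eqref{4.1}, so Lemma~\ref{lem4.1} indeed applies.) The lemma bounds $y(t)$ uniformly in terms of the initial data, and since $x(t)=\e^{-E(t)}y(t)$ with $E(t)\to\infty$ by (iii), every solution of \eqref{4.5} tends to $0$; combined with the stability above this is asymptotic stability.

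The main obstacle is the uniform handling of the delayed cost $\tilde c_i^\tau(t)$. Unlike in Theorem~\ref{thm3.4}, this is itself a nested delayed integral of the coefficients, so the exponential weights appear both outside and inside the integral, and the spurious $\de e(s)$ contribution coming from differentiating $\e^{E}$ must be absorbed into $\de C_\de K\,d_i(t)$ and then dominated through $d_i(t)\ge e(t)$. Making all these majorizations hold uniformly in $t\ge T$ and $i=1,\dots,n$ — so that a single smallness threshold for $\de$ works everywhere — is the delicate bookkeeping, and it is precisely where hypotheses (iv) and (i), rather than mere pointwise positivity, are indispensable.
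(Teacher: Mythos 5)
Your proposal is correct and follows essentially the same route as the paper: the substitution $y(t)=\e^{\de E(t)}x(t)$ with $E(t)=\int_0^te(u)\,du$ and $0<\de\ll 1$, the comparison $A(t)\le \widehat A^\tau(t)$, $C^\tau(t)\le\widehat C^\tau(t)$, the verification that $\tilde M(t)$ in \eqref{M(t)2} satisfies (H2) by arguing as in Theorem \ref{thm3.4}, and the conclusion via Lemma \ref{lem4.1}. In fact your estimates $\tilde c_i^\tau(t)\le C_\de^2\,\widehat c_i^\tau(t)+\de C_\de K\,d_i(t)$ and $d_i(t)\ge e(t)$ supply precisely the bookkeeping that the paper's proof omits with ``Details are omitted.''
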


\begin{proof} 
With the notations in   \eqref{4.3}, since  $L_{ii}(t)\equiv 0$,  $a_{ij}(t):=\|L_{ij}(t)\|\le \sum_{k=1}^p g(-\tau_{ij}^k(s))|d_{ij}^k(t)|= \widehat a_{ij}(t)$ for $i\ne j$ and
$$c_i^\tau(t)= \sum_{k=1}^p d_{ii}^k(t)\int_{t-\tau_{ii}^k(t)}^t \Big (\sum_l  g(-\tau_{ii}^l(u))d_{ii}^l(u) +\sum_{j\ne i} \|L_{ij}(u)\|\Big)\, du\le\widehat c_i^\tau(t)\q {\rm for}\q 1\le i\le n.$$
Thus we get $A(t)\le \widehat {A}^\tau (t)$ and  $C^\tau(t)\le \widehat C^\tau(t)$.
By the change  of variables $y(t)=\e^{\de E(t)}x(t)$ where $E(t)=\int_0^t e(u)\, du$ and $0<\de \ll 1$, system \eqref{4.5} is transformed into \eqref{y(t)2} with $e(t)$ replaced by $\de e(t)$.
 Arguing as in Theorem \ref{thm3.4}, one can show that the matrix $\tilde M(t)$ in \eqref{M(t)2} satisfies (H2), and
 the conclusion comes from Lemma \ref{lem4.1} applied to $y(t)$, since  solutions of \eqref{4.5} then satisfty $x(t)=\e^{-\de E(t)}y(t)\to 0$ as $t\to\infty$. Details are omitted.
\end{proof}


\section{Examples}
\setcounter{equation}{0}
We now illustrate our results with some  simple examples. The notation for $L_{ij}(t)$ in Sections 3 and 4 will be used.

\begin{exmp}\label{exmp2.1}  {\rm Consider a linear  system of the form
 \begin{equation}\label{2.19}
x_i'(t)=-d_it^2 x_i(t)+\sum_{j=1}^n\ga_{ij}(t)\int_{-\tau_{ij}(t)}^0 K_{ij}(s)x_j(t+s)\, ds,\q t\ge 0, i=1,\dots,n,\\
\end{equation}
where $d_i$ are positive constants, $\ga_{ij}(t), \tau_{ij}(t)$ are continuous and nonnegative, $K_{ij}$ are bounded and integrable on $[-\tau_{ij}(t),0]$, $1\le i,j\le n$. Write $\tilde K_{ij}(t,s)= K_{ij}(s)\chi_{[-\tau_{ij}(t),0]}(s)$ for $t\ge 0, s\le 0$.  Take ${\cal C}=C_g^0$ for any function $g$ satisfying the properties in (g) and such that, for some $\al>0$ and all $i,j$, $\sup_{t\in\R^+}\int_{-\tau_{ij}(t)}^0\e^{-\al s}g(s)\, ds<\infty$. Thus, (H3) is satisfied with any $\al_0\in (0,\al)$.
  The linear operators $L_{ij}(t)$ have norm $a_{ij}(t)=\ga_{ij}(t)\|g\tilde K_{ij}(t,\cdot)\|_{L^1}=\ga_{ij}(t)\int_{-\tau_{ij}(t)}^0 g(s)|K_{ij}(s)|\, ds$. 
If $$\ga_{ij}(t)=o(t^2)\q {\rm as}\q t\to\infty,$$
then (H5) holds, and Theorem \ref{thm3.1}(ii) implies that \eqref{2.19} is   exponentially asymptotically stable.}
\end{exmp}


\begin{exmp}\label{exmp2.1}   {\rm Consider a linear  system of the form
 \begin{equation}\label{2.19'}
x_i'(t)=-d_i(t) x_i(t)+\sum_{j=1}^n \ga_{ij}(t) \int_{t/2}^t K_{ij}(s)x_j(s)\, ds,\q t\ge 0, i=1,\dots,n,\\
\end{equation}
where $d_i,\ga_{ij}:\R^+\to \R^+$ are continuous, with $ \ga_{ij}(t)$  bounded, $d_i(t)\ge \frac{1}{t}$, $K_{ij}$ are continuous, integrable  on $\R^+$  and  there is $\al>0$ such that $\int_0^\infty \e^{\al s}|K_{ij}(s)|\, ds<\infty,\ 1\le i,j\le n.$
This equation has the form \eqref{LinTau} with unbounded delays $\tau_{ij}(t)=t/2$ for all $i,j$. Consider the space ${\cal C}=C_\ga^0$, for some  $\ga\in (0,\al)$. 

With the previous notation in \eqref{3.7}, $M(t)=\diag\, (d_1(t),\dots,d_n(t))-\big [\|L_{ij}(t)\|\big]$, $L_{ij}(t)\phi= \ga_{ij}(t) \int_{-t/2}^0 K_{ij}(t+s)\phi(s)\, ds$ for $\phi\in C_\ga^0(\R),\ 1\le i,j\le n$.
For any $\de >0$ and $T=T(\de)\ge 0$ sufficiently large, 
 \begin{equation*}
 \begin{split}
 a_{ij}(t):=\|L_{ij}(t)\|&=\ga_{ij}(t)\e^{\ga t} \int_{t/2}^t \e^{-\ga u} |K_{ij}(u)|\, du\\
 & \le  \ga_{ij}(t)\e^{(\ga-\al) t/2} \int_{t/2}^t  \e^{\al u}|K_{ij}(u)|\, du \le \frac{\de}{t},\q t\ge T,1\le i,j\le n .
 \end{split}
\end{equation*}
This shows that one can choose 
 $c\in (0,1)$ and $T>0$ such that
$$cd_i(t)\ge \sum_{j=1}^n a_{ij}(t), \q e_i(t):=d_i(t)-\sum_{j=1}^n a_{ij}(t)\ge \frac{c}{t},\q t\ge T.$$
 With $e(t)= \frac{c}{t}$ for $t\ge T$, we have $\int^\infty e(t)\, dt=\infty$ and
$\int_{t/2}^t e(s)\, ds=c\log2$ for $t\ge T$. 
From Theorem \ref{thm3.4}, it follows that the zero solution of \eqref{2.19'} is globally atractive. 
%
%
} \end{exmp}

\begin{exmp}\label{exmp2.3}  {\rm Consider the linear  system
 \begin{equation}\label{2.20}
x_i'(t)=-\sum_{ j=1}^nd_{ij}t^\al x_j(t)+\sum_{j=1}^nb_{ij}t^\al\int_{-\tau_{ij}(t)}^0 x_j(t+s)\, ds,\q i=1,\dots,n,\\
\end{equation}
where $\al>0, b_{ij}, d_{ij}\in \R$ with $d_i:=d_{ii}>0$ for all $i$, and the delays $\tau_{ij}(t)$ are continuous with  $0\le \tau_{ij}(t)\le r_{ij}$ for some constants $r_{ij}>0$, $i,j=1,\dots,n$. 

  With the  notation in \eqref{3.7} we have $d_{ij}(t)=d_{ij}t^\al, a_{ij}(t)=|b_{ij}|\tau_{ij}(t)t^\al \le |b_{ij}|r_{ij} t^\al$. Define the $n\times n$ matrices $\widehat{D}=\diag\, (d_1,\dots,d_n)-\Big [(1-\de_{ij}) |d_{ij}|\Big ]$, $|B|=\Big [|b_{ij}|r_{ij}\Big ]$, where $\de_{ij}=1$ if $i=j$ and $\de_{ij}=0$ if $i\ne j$,  and assume that
$$N:=\widehat{D}- |B|$$
 is a non-singular M-matrix. This is equivalent to saying that there exists a positive vector $v$ such that $u:=Nv>0$, hence  (H5) is satisfied.  From Theorem \ref{thm3.1}(ii) we deduce that \eqref{2.20} is exponentially asymptotically stable. Note however that none of coefficients is uniformly bounded on $\R^+$.
}
\end{exmp}

\begin{exmp}\label{exmp2.4} {\rm 
Take a function $g$ satisfying (g). 
Clearly $\psi\equiv 1\in C_g^0(\R)$ and $\|1\|_g=1$. By the Hahn-Banach theorem, there exists a functional $T\in (C_g^0(\R))'$ such that $T(1)=\|T\|=1$.
 In ${\cal C}=C_g^0(\R^2)$, consider the planar system 
\begin{equation*}
\begin{split}
x_1'(t)&=-2x_1(t)+T(x_{1,t})+(-1)^nT(x_{2,t})\\
x_1'(t)&=-2x_2(t)+T(x_{1,t})+T(x_{2,t})
\end{split}
\end{equation*}
%
with $n=1,2$. With the notation in \eqref{MM}, we have 
 $M_0=\left[ \begin{array}{cc}-1&(-1)^n \\
1&-1\end{array}\right]$ and $M=\left[ \begin{array}{cc}1&-1 \\
-1&1\end{array}\right]$, thus (H2) is satisfied with $v=(1,1)$.  For $n=2$,  $\la=0$ is a root of  its characteristic equation, which is given by $\big(\la +2-T(\e^{\la\cdot})\big)^2-T(\e^{\la\cdot})^2=0$;
since  $(c,c)$  are equilibria for any $c$ constant, the system is not asymptotically stable.  For the case $n=1$, $\det M_0\ne 0$ and  from Theorem \ref{thm3.3} the system is asymptotically stable.
}
\end{exmp}

\begin{exmp}\label{exmp2.5} {\rm Consider the  linear planar system
 \begin{equation}\label{2.21}
 \begin{split}
x_1'(t)&=-(1+\cos ^2t)x_1(t-\tau_{11}(t))+c_1(1+\sin^2t)x_2(t-\tau_{12}(t))\\
x_2'(t)&=-(1+\sin ^2t)x_2(t-\tau_{22}(t))+c_2(1+\cos^2t)x_1(t-\tau_{21}(t))\\
\end{split}
\end{equation}
where $c_i\ne 0$ and the delays $\tau_{ij}(t)$ are continuous and nonnegative  on $\R^+$ (and possibly unbounded), $i,j=1,2$.

First, consider the case $\tau_{11}(t)=\tau_{22}(t)\equiv 0$. With $\ga_i=|c_i|,\, i=1,2$, and the  notation in \eqref{3.7}, we have $D(t)=\diag (1+\cos^2t,1+\sin^2t)$ and
 \begin{equation}\label{5.5}
 \begin{split}
M(t)= D(t)-A(t)&=\left [\begin{matrix}
1+\cos^2t&-\ga_1(1+\sin^2t)\\
-\ga_2(1+\cos^2t)&1+\sin^2t
\end{matrix}\right ].
\end{split}
\end{equation}
For a vector $v=(1,v_2)$ with $v_2>0$,  write $M(t)v=\left [\begin{matrix} e_1(t)\\ e_2(t)v_2\end{matrix}\right ]$. Since $\min e_1(t)=1-2v_2\ga_1, \min e_2(t)=1-2v_2^{-1}\ga_2$, if $4\ga_1\ga_2<1$, i.e., if
 \begin{equation}\label{gammas} 4|c_1c_2|<1,
 \end{equation}
  choosing $v_2$ such that $2\ga_2< v_2< (2\ga_1)^{-1}$, condition (H4) is satisfied with $v=(1,v_2)$. From Theorem \ref{thm3.1}, \eqref{2.21} is exponentially asymptotically stable. 

Secondly, let $\tau_{11}(t)>0,\tau_{22}(t)>0$ for some $t>0$, but assume that all the delays $\tau_{ij}(t)$ are uniformly bounded,  $\tau_{ij}(t)\le r_{ij}$ on $\R^+$ with $\max_{i,j=1,2}r_{ij}=r>0$,  so that we work on ${\cal C}=C([-r,0];\R^2)$ (and $g\equiv 1$ on $[-r,0]$). For $C^\tau (t)=\diag\, (c_1^\tau(t),c_2^\tau(t))$ defined by \eqref{4.3}, we have 
\begin{equation*}
 \begin{split}
c_1^\tau(t)=
(1+\cos^2t) \int_{t-\tau_{11}(t)}^t (1+\cos^2u+\ga_1(1+\sin^2u))\, du\\
c_2^\tau(t)=
(1+\sin^2t)\int_{t-\tau_{22}(t)}^t(\ga_2( 1+\cos^2u)+1+\sin^2u)\, du,
\end{split}
\end{equation*}
and rough estimates give
\begin{equation*}
 \begin{split}
c_1^\tau(t)\le 2 r_{11}
(1+\cos^2t) (1+\ga_1),\\
c_2^\sigma(t)\le 2 r_{22}
(1+\sin^2t) (1+\ga_2).
\end{split}
\end{equation*}
For  $\widehat D(t),C^\tau (t)$ defined by \eqref{M(t)3}, the matrix $\widehat D(t)$ coincides with the matrix $D(t)-A(t)$  in \eqref{5.5}. Proceeding as above we deduce that it is possible to choose a  vector $v=(1,v_2)$ with $v_2>0$ such that $M(t)=\widehat D(t)-C^\tau (t)$ satisfies (H4), provided that
 \begin{equation}\label{gammas2} 4|c_1c_2|<\big (1-2r_{11}(1+|c_1|)\big)\big (1-2r_{22}(1+|c_2|)\big),
 \end{equation}
and from Corollary \ref{cor4.2} we conclude that \eqref{2.21} is exponentially asymptotically stable. 
Note that when $r_{ii}= 0\, (i=1,2)$, condition \eqref{gammas2} reduces to \eqref{gammas}.
}
\end{exmp}

%

\begin{exmp}\label{exmp2.6} {\rm Consider the  following scalar equation:
\begin{equation}\label{5.8}
x'(t)=-(1+\cos^2t) x(t-\tau_1(t))+(1-\sin^2t)x(t-\tau_2(t))+b\int_{t/2}^t\e^{-\ga s}x(s)\, ds,\q t\ge 0,
\end{equation}
where  $b,\ga>0$, $\tau_i:\R^+\to\R^+$ are continuous with $\tau_i(t)\le r_i,\, i=1,2$. With the notations in Corollary \ref{cor4.1}, $d(t)=1+\cos^2t -(1-\sin^2t)=1, |d_1(t)|+|d_2(t)|=2+\cos^2 t-\sin^2t$ and $L_0(t)$ is 
the operator defined by $L_0(t)\phi=b\e^{-\ga t}\int_{-t/2}^0 \e^{-\ga s}\phi(s)\, ds$. Fix the phase space ${\cal C}=C_\ga^0(\R)$.
Thus, the  norm $\be(t):=\|L_0(t)\|$ is given by $\be(t)=b\e^{-\ga t}\int_{-t/2}^0 \e^{-2\ga s}\, ds=\frac{b}{2\ga}(1-\e^{-\ga t})$. With $r=\max(r_1,r_2)$, the coefficient $c^\tau(t)$  in \eqref{4.3} satisfies
\begin{equation*}
\begin{split}c^\tau(t)&\le (2+\cos^2 t-\sin^2t)
\int_{t-r}^t \left (\e^{\ga r}  (2+\cos^2 u-\sin^2u)+ \frac{b}{2\ga}(1-\e^{-\ga u})\right)\, du\\
&\le 3r\Big [3\e^{\ga r}+\frac{b}{2\ga}\Big ]+O(\e^{-\ga t}),\q {\rm as}\q t\to \infty.
\end{split}
\end{equation*}
Thus, if  $9\e^{\ga r}r+\frac{3b}{2\ga}(1+r)<1$, then for any $\vare >0$ there is $T>0$ such that
$$c^\tau(t)+\be(t)\le 3\Big [3\e^{\ga r}r+\frac{b}{2\ga}(1+r)\Big ]+\vare<1,\q t\ge T.
$$ In particular, \eqref{5.8} is exponentially asymptotically stable
provided that $3b<2\ga$ and $r>0$ is sufficiently small. }\end{exmp}

\section*{Acknowledgements}
This work was  supported by National Funding from FCT - Funda\c c\~ao para a Ci\^encia e a Tecnologia (Portugal) under project UIDB/04561/2020.

\end{document}